\newtheorem{theorem}{Theorem}[section]
\newtheorem{Lemma}[theorem]{Lemma}
\theoremstyle{definition}
\newtheorem{remark}[theorem]{Remark}
\newcommand{\psu}{{\rm{PSU}}_3(q)}
\begin{document}

\title[On recognizability of ${\rm{PSU}}_3(q)$ by the orders of maximal abelian subgroups]
{On recognizability of ${\rm{PSU}}_3(q)$ by the orders\\ of maximal
abelian subgroups}

\author[Zahra Momen\,\, \&\ Behrooz Khosravi\,     ]{Zahra Momen  \ \& \  Behrooz Khosravi  }

\address{ School of Mathematics,
Institute for Research in Fundamental sciences  (IPM), P.O.Box:
19395--5746, Tehran, Iran \newline Dept. of Pure  Math.,  Faculty  of Math. and Computer Sci. \\
Amirkabir University of Technology (Tehran Polytechnic)\\ 424,
Hafez Ave., Tehran 15914, IRAN }

\email{ zahramomen@yahoo.com}

\email{ khosravibbb@yahoo.com}

\thanks{ The second author was supported in part by a grant from IPM  (No 92050120)}

\subjclass[2000]{ 20D05, 20D60, 20D08.  }

\keywords{{\it simple group, maximal abelian subgroup,
characterization, projective special unitary group, prime graph}.}

\begin{abstract}
In [Li and Chen, A new characterization of the simple group
$A_1(p^n)$, {\it Sib. Math. J.,} 2012], it is proved that the simple
group $A_1(p^n)$ is uniquely determined by the set of orders of its
maximal abelian subgroups. Also in [Momen and Khosravi, Groups with
the same orders of maximal abelian subgroups as $A_2(q)$, {\it
Monatsh. Math.,} 2013], the authors proved that if $L=A_2(q)$, where
$q$ is not a Mersenne prime, then every finite group with the same
orders of maximal abelian subgroups as $L$, is isomorphic to $L$ or
an extension of $L$ by a subgroup of the outer automorphism group of
$L$. In this paper, we prove that if $L=\psu$, where $q$ is not a
Fermat prime, then every finite group with the same orders of
maximal abelian subgroups as $L$, is isomorphic to $L$ or an
extension of $L$ by a field automorphism of $L$.
\end{abstract}
\maketitle
\section{\bf Introduction}
If $n$ is an integer, then we denote by $\pi(n)$ the set of all
prime divisors of $n$. If $G$ is a finite group, then $\pi(|G|)$
is denoted by $\pi(G)$. We construct the {\it prime graph} of $G$,
which is denoted by $\Gamma(G)$, as follows: the vertex set is
$\pi(G)$ and two distinct primes $p$ and $p'$ are joined by an
edge if and only if $G$ has an element of order $pp'$. Let $s(G)$
be the number of connected components of $\Gamma(G)$ and let
$\pi_{1}(G), \pi_{2}(G), ..., \pi_{s(G)}(G)$ be the connected
components of $\Gamma(G)$. Sometimes we use the notation $\pi_i$
instead of $\pi_i(G)$. If $2\in \pi(G)$ we always suppose that  $2\in
\pi_{1}(G)$. Also $|G|$ can be expressed as a product of coprime
positive integers $m_i(G)$ (or briefly $m_i$), $i=1,2,...,s(G)$ with $\pi(m_i)=\pi_i$.
The set of integers $m_i$, $i=1,2,...,s(G)$ is called the set of {\it order components} of $G$ and is denoted by $OC(G)$. The order components of finite simple groups with disconnected prime
graphs are listed in \cite[Tables 1-3]{ijacb}. Let $m$ and
$n$ be natural numbers.  We write $m\thicksim n$ if and only if
for every prime divisor $r\in \pi(m)$ and $s\in\pi(n)$, $r$ is
adjacent to $s$ in $\Gamma(G)$. The {\it spectrum} of a finite
group $G$,  which is denoted by $\pi_e(G)$, is the set of its
element orders. A subset $X$ of the vertices of a graph is called
an {\it independent set} if the induced subgraph on $X$ has no edge.
Let $G$ be a finite group and $r\in\pi(G)$. We denote by
$\rho(G)$ some independent set of vertices in $\Gamma(G)$ with
the maximal number of elements. Also some independent set of
vertices in $\Gamma(G)$ containing $r$ with the maximal number of
elements is denoted by $\rho(r,G)$. Let $t(G)=|\rho(G)|$ and
$t(r,G)=|\rho(r,G)|$. Let $M(G)=\{|H| :$ $H$ is a maximal abelian subgroup of $G$$\}$.

A simple group $G$ is called a $K_3$-group if $|\pi(G)|$ = 3. It is
known that if $G$ is a $K_3$-group, alternating group $A_n$, where
$n$ and $n-2$ are primes or $n\leq 10$, $M$-group, $J$-group,
$A_{1}(2^n)$, $S_{z}(2^{2m+1})$, a sporadic simple group or the
automorphism group of a sporadic group, then $G$ is uniquely
determined by the orders of its maximal abelian subgroups
{\rm{(}}see \cite{chenan,han,kuma,wangtez,wang}{\rm{)}}. Recently in
\cite{a1pn}, it is proved that if $q$ is a prime power, then the
simple group $A_1(q)$ is uniquely determined by the orders of its
maximal abelian subgroups. In \cite{a2q}, it is proved that if $G$
is a finite group such that $M(G)=M(A_2(q))$, where $q$ is not a
Mersenne prime, then $G$ is isomorphic to $A_2(q)$ or an extension
of $A_2(q)$ by a subgroup of the outer automorphism group of
$A_2(q)$.

In this paper, we consider the simple group ${\rm{PSU}}_3(q)$, where
$q$ is not a Fermat prime, and as the main result we prove that if
$G$ is a finite group such that $M(G)=M({\rm{PSU}}_3(q))$, then $G$
is isomorphic to ${\rm{PSU}}_3(q)$ or an extension of
${\rm{PSU}}_3(q)$ by a subgroup of the outer automorphism group of
${\rm{PSU}}_3(q)$. Note that up to now the only known Fermat primes
are 3, 5, 17, 257, 65537.

Throughout this paper we suppose that $q$ is a prime power. Also all
groups are finite and by simple groups we mean nonabelian simple
groups. If $m$ and $n$ are natural numbers, then $(m,n)$ means the
greatest common divisor of $m$ and $n$ and $[m,n]$ means the least
common multiple of $m$ and $n$. Also if $m$ is a positive integer
and $p$ is a prime number, then $(m)_p$ denotes the $p$-part of $m$,
in other words, $(m)_p= p^k$ if $p^k\mid m$ but $p^{k+1}\nmid m$.
Let $G$ be a group and $H$ be a subgroup of $G$. Then $H$ char $G$
means that $H$ is a characteristic subgroup of $G$. All further
unexplained notations are standard and refer to \cite{atlas}. For
the proof of the main theorem, we use the classification of finite
simple groups.
\section{\bf Preliminary Results}
Using \cite[Lemma 2.2]{ijacb}, we have the following result:
\begin{Lemma}\label{ijac}
A finite group $G$ with disconnected prime graph $\Gamma(G)$ satisfies one of the following conditions:\\
(a) $s(G)=2$ and $G=KC$ is a Frobenius group with kernel $K$ and complement $C$ and two connected components of $\Gamma(G)$ are $\Gamma(K)$ and $\Gamma(C)$. Moreover $K$ is nilpotent, and hence $\Gamma(K)$ is a complete graph. If $C$ is solvable, then $\Gamma(C)$ is complete; otherwise, $\{2,3,5\}\subseteq \pi(G)$ and $\Gamma(C)$ can be obtained from the complete graph with vertex set $\pi(C)$ by removing the edge $\{3,5\}$.\\
(b) $s(G)=2$ and $G$ is a 2-Frobenius group, i.e., $G=ABC$, where $A$ and $AB$ are normal subgroups of $G$, $B$ is a normal subgroup of $BC$, and $AB$ and $BC$ are Frobenius groups. The two connected components of $\Gamma(G)$ are complete graphs $\Gamma(AC)$ and $\Gamma(B)$.\\
(c) $G$ has a normal series $1\unlhd H\unlhd K\unlhd G$ such that $H$ and $G/K$ are $\pi_1$-groups, while $H$ is nilpotent, $K/H$ is a nonabelian simple group, and $|G/K|\mid |{\rm{Aut}}(K/H)|$.
\end{Lemma}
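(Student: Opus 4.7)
The plan is to follow the classical Gruenberg--Kegel theorem together with the refinements due to Williams and Kondratiev. Given $G$ with $s(G) \ge 2$, I would first split on whether $G$ is solvable.

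In the solvable case I would aim to derive either the Frobenius or $2$-Frobenius structure. Writing $F = \mathrm{Fit}(G)$, the fact that $C_G(F) \subseteq F$ together with the disconnectedness of $\Gamma(G)$ forces a normal Hall subgroup whose primes form one component; a complement must then act fixed-point-freely on this kernel, producing a Frobenius section. If $G$ itself is such a Frobenius group we are in (a); otherwise a further normal layer above produces the decomposition $G = ABC$ of (b). Thompson's theorem that Frobenius kernels are nilpotent then forces $\Gamma(K)$ to be complete.

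In the non-solvable case I would take the solvable radical $H$ and examine the socle of $G/H$. This socle is a direct product $S_1 \times \cdots \times S_k$ of nonabelian simple groups; if $k \ge 2$, then every prime of $S_i$ is joined to every prime of $S_j$ through elements of $S_i \times S_j$, and these primes further connect to $\pi(H)$, forcing $\Gamma(G)$ to be connected. Hence $k=1$, the socle is a single simple group $K/H$, and standard Frattini-type arguments then show that $H$ is a nilpotent $\pi_1$-subgroup and that $G/K$ embeds in $\mathrm{Out}(K/H)$, which is (c).

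Finally, the structural refinement of the Frobenius complement $C$ in (a) comes from the classical description of Frobenius complements: solvable complements are $Z$-groups (all Sylow subgroups cyclic, possibly with a generalized quaternion Sylow $2$-subgroup), and in either case $\Gamma(C)$ is complete; non-solvable complements have $\mathrm{SL}_2(5)$ as a normal subgroup with cyclic quotient of order coprime to $30$, and since the prime graph of $\mathrm{SL}_2(5)$ misses exactly the edge $\{3,5\}$, this propagates to $C$ and forces $\{2,3,5\} \subseteq \pi(G)$. The main obstacle I expect is ruling out elements of mixed order straddling two components in the Frobenius and $2$-Frobenius cases — this is the point where one must carefully combine Hall's theorem with the fixed-point-free action to convert a purely graph-theoretic hypothesis into the strong multiplicative structure demanded by the conclusion.
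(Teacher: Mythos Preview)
The paper does not actually prove this lemma: it is stated as a direct consequence of \cite[Lemma~2.2]{ijacb} (the Gruenberg--Kegel/Williams structure theorem) and no argument is given beyond the citation. So there is no ``paper's proof'' to compare against in any substantive sense.

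Your sketch is the standard route by which that cited result is established, and the overall architecture---solvable case via the Fitting subgroup yielding Frobenius or $2$-Frobenius structure, non-solvable case via the solvable radical and a unique simple socle factor, Zassenhaus's classification of Frobenius complements for the refinement in (a)---is correct. Two places deserve more care if you ever write this out in full. First, in the non-solvable case you take $H$ to be the solvable radical and then assert that $H$ is nilpotent; this is not automatic and is the most delicate point of the Gruenberg--Kegel argument. One must use that a Hall subgroup for some component $\pi_i$ with $i\ge 2$ acts on $H$, and disconnectedness forces this action to be fixed-point-free on each chief factor of $H$, from which nilpotency of $H$ follows (ultimately via Thompson). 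Second, your claim that ``these primes further connect to $\pi(H)$'' when $k\ge 2$ is not the cleanest way to force $k=1$; the usual argument observes directly that $\pi(S_1\times\cdots\times S_k)$ is already a connected set containing $2$, and then one checks it meets every component, contradicting $s(G)\ge 2$. These are refinements rather than genuine gaps in strategy.
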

\begin{Lemma}\label{froprime}
{\rm (\cite{frochen}, \cite{pasman})} Let $G$ be a Frobenius group of even order with kernel $K$ and complement $H$. Then $s(G)=2$, the prime graph components of $G$ are $\pi(H)$ and $\pi(K)$ and the following assertions hold:

(a) $K$ is nilpotent;

(b) $|K|\equiv 1 \pmod{|H|}$;

(c) Every subgroup of $H$ of order $ts$, with $t$ and $s$ (not necessarily distinct) primes, is cyclic.

In particular, $2\in\pi(K)$ and all Sylow subgroups of $H$ are cyclic or, $2\in\pi(H)$, $K$ is an abelian group, $H$ is a solvable group, the Sylow subgroups of odd order of $H$ are cyclic and the $2$-Sylow subgroups of $H$ are cyclic or generalized quaternion groups. If $H$ is nonsolvable, then $H$ has a subgroup $H_0$ such that $[H: H_0]\leq 2$, $H_0=Z\times {\rm{SL}}_2(5)$, $(|Z|,30)=1$ and the Sylow subgroups of $Z$ are cyclic.
\end{Lemma}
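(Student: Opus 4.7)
The plan is to assemble the statement from the classical theory of Frobenius groups; every part is standard, but the argument is most transparent when broken into four steps.

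\textbf{Step 1: Partition of $G$ and separation of the prime graph components.} I would first recall the structural decomposition $G = K \rtimes H$ with $K \cap H = 1$, and the fact that every non-identity element of $G$ lies either in $K$ or in a unique conjugate of $H$, so $G \setminus K$ is the disjoint union $\bigsqcup_g (H^g \setminus \{1\})$. If $k \in K^{\#}$ and $h \in H^{\#}$ satisfied $kh = hk$, then $h$ would centralize a non-identity element of $K$, contradicting the fixed-point-free action of $H$ on $K$. Hence no element of $G$ has order $pr$ for $p \in \pi(K)$, $r \in \pi(H)$. Nilpotence of $K$ (Step 2) then makes $\pi(K)$ one complete block, and the structure theorem for $H$ (Step 3) makes $\pi(H)$ the other, giving $s(G) = 2$ with components exactly $\pi(K)$ and $\pi(H)$.

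\textbf{Step 2: Items (a) and (b).} Assertion (a) is Thompson's theorem that the Frobenius kernel is nilpotent. For (b) I would use that $H$ acts by conjugation on $K$, and that this action is semiregular on $K \setminus \{1\}$ (a fixed point would again give a common centralizer); hence $K \setminus \{1\}$ decomposes into $H$-orbits each of size $|H|$, yielding $|H| \mid |K|-1$.

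\textbf{Step 3: Fixed-point-free action of $H$ on $K$ and assertion (c).} Every non-identity element $h \in H$ acts fixed-point-freely on $K$. Applying Burnside's classical lemma that a $p$-group admitting a fixed-point-free automorphism has a unique subgroup of order $p$, I conclude that each Sylow $p$-subgroup of $H$ contains a unique subgroup of order $p$; this forces it to be cyclic when $p$ is odd, and cyclic or generalized quaternion when $p = 2$. In particular any subgroup of $H$ of order $ts$ is cyclic, which is (c). To split into the two cases: if $2 \in \pi(K)$, then by (b) $|H|$ is odd and all Sylows of $H$ are cyclic. If $2 \in \pi(H)$, pick an involution $\tau \in H$; the standard argument that a fixed-point-free involution on a finite group inverts every element forces $K$ to be abelian.

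\textbf{Step 4: The nonsolvable case.} For the remaining assertion about nonsolvable $H$, I would invoke the Zassenhaus classification of Frobenius complements: the only nonsolvable such complements are, up to a subgroup of index at most $2$, of the form $Z \times \mathrm{SL}_2(5)$ with $(|Z|,30) = 1$ and all Sylow subgroups of $Z$ cyclic. This comes from analyzing the possible finite groups all of whose Sylow-$p$ subgroups are cyclic (for odd $p$) or cyclic/quaternion (for $p = 2$).

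\textbf{Main obstacle.} The technically hardest ingredient is the Zassenhaus classification of nonsolvable Frobenius complements, which is a delicate group-theoretic and representation-theoretic argument; by contrast, (a)--(c) and the dichotomy follow from standard counting and the fixed-point-free action, so the bulk of the work is in packaging these classical results and the nonsolvable classification into the precise form stated.
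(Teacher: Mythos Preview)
The paper does not prove this lemma at all: it is stated with citations to \cite{frochen} and \cite{pasman} and used as a black box in the preliminary section. There is no argument in the paper to compare your sketch against; your outline already goes well beyond what the authors supply.

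That said, Step~3 of your sketch contains a genuine confusion worth flagging. You invoke ``Burnside's classical lemma that a $p$-group admitting a fixed-point-free automorphism has a unique subgroup of order $p$'' in order to conclude that a Sylow $p$-subgroup $P$ of $H$ has a unique subgroup of order $p$. But $P$ is not \emph{admitting} a fixed-point-free automorphism here; rather, $P$ is \emph{acting} fixed-point-freely on $K$, which is the opposite direction. Moreover, the lemma as you phrase it is false: $\mathbb{Z}_p\times\mathbb{Z}_p$ admits fixed-point-free automorphisms (for instance, scalar multiplication by a generator of $\mathrm{GF}(p^2)^{\times}$ on its additive group) yet has $p+1$ subgroups of order $p$. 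The correct argument runs the other way: one shows that a Frobenius complement contains no copy of $\mathbb{Z}_p\times\mathbb{Z}_p$ (using the faithful fixed-point-free representation of $H$ on an elementary abelian section of $K$, where a non-cyclic abelian $p$-subgroup would force some nontrivial element to have eigenvalue $1$), and then invokes the classification of $p$-groups in which every subgroup of order $p^2$ is cyclic. With that correction in place, the rest of your outline---Thompson for (a), orbit counting for (b), the inversion argument for abelian $K$ when $2\in\pi(H)$, and the Zassenhaus classification for the nonsolvable case---is exactly the standard route found in Passman's book, which is what the paper is citing.
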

\begin{Lemma}\label{=ordercom}
{\rm (\cite{tabl14})} Let $G$ be a finite group and let $N$ be a nonabelian simple group. If $s(G)\geq 2$, $M(G)=M(N)$ and $G$ has a normal series $1\unlhd H\unlhd K\unlhd G$ such that $H$ and $G/K$ are $\pi_1$-groups, while $K/H$ is a nonabelian simple group, then

(a) the odd order components of $N$ are equal to some of those of $K/H$. Moreover $s(K/H)\geq s(G)$;

(b) if $H=1$, then $G/K\leq {\rm{Out}}(K)$.
\end{Lemma}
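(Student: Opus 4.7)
The plan is to exploit the fact that $M(G) = M(N)$ already determines the prime graph of $G$: if an abelian subgroup $A$ has order divisible by distinct primes $p, q$, then $A$ contains commuting elements of orders $p$ and $q$, hence an element of order $pq$; conversely, any element of order $pq$ lives in a maximal abelian subgroup whose order is divisible by $pq$. Thus $\Gamma(G)$ can be read off from $M(G)$, and so $\Gamma(G) = \Gamma(N)$; in particular the partitions into connected components coincide, and we may freely identify $\pi_i(N)$ with $\pi_i(G)$.

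For part (a), I would first note that every prime $p$ lying in some $\pi_i(G)$ with $i \geq 2$ must belong to $\pi(K/H)$, because $\pi(H) \cup \pi(G/K) \subseteq \pi_1$ while $\pi(G) = \pi(H) \cup \pi(K/H) \cup \pi(G/K)$. The technical core is the order-preserving lift: given $\bar y \in K/H$ of order $m$, choose a pre-image $y \in K$ of order $n$; then $m \mid n$, and writing $n = mt$ one finds $y^m \in H$ of order $t \mid |H|$, so $y^t$ has order $n/\gcd(n,t) = m$ in $K$. Thus $\pi_e(K/H) \subseteq \pi_e(K) \subseteq \pi_e(G)$. For the reverse direction, if $p, q \notin \pi_1$ and $G$ has an element $g$ of order $pq$, then $gK \in G/K$ has order coprime to $|G/K|$, hence $g \in K$; reducing mod $H$, the order of $gH$ cannot be $1$, $p$, or $q$ (each would force $p$ or $q$ into $\pi(H)$), so $gH$ has order $pq$ in $K/H$. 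Combining both directions, a pair $\{p,q\}$ outside $\pi_1$ is an edge of $\Gamma(G)$ if and only if it is an edge of $\Gamma(K/H)$, so each $\pi_i(G)$ with $i \geq 2$ is exactly a connected component of $\Gamma(K/H)$.

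To finish (a), observe that $K/H$ is a nonabelian simple group, so $2 \in \pi(K/H)$, and $2 \in \pi_1(G)$; this prime lies in a component of $\Gamma(K/H)$ disjoint from each $\pi_i(G)$ with $i \geq 2$, giving $s(K/H) \geq s(G)$. The odd order components of $N$ are $\pi_i(N) = \pi_i(G)$ for $i \geq 2$, and each equals a connected component of $\Gamma(K/H)$, which is the first assertion.

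For part (b), assume $H = 1$, so $K$ itself is simple. Conjugation defines $\varphi \colon G \to \mathrm{Aut}(K)$ with kernel $C_G(K)$; since $C_G(K) \cap K \leq Z(K) = 1$, the group $C_G(K)$ embeds into $G/K$ and is thus a $\pi_1$-group. If $C_G(K) \neq 1$, pick $x \in C_G(K)$ of prime order $p \in \pi_1$ and any $y \in K$ of prime order $q$ lying in some $\pi_j(G)$ with $j \geq 2$ (available because $s(G) \geq 2$ and $\pi_j(G) \subseteq \pi(K)$). Since $x$ and $y$ commute, $xy$ has order $pq$, forcing $p \sim q$ in $\Gamma(G)$ and contradicting $p \in \pi_1$, $q \in \pi_j$. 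Hence $C_G(K) = 1$, $G$ embeds into $\mathrm{Aut}(K)$, and $G/K$ embeds into $\mathrm{Out}(K)$. I expect the main obstacle to be articulating the lifting step in (a) in a way that yields both directions of the prime-graph comparison cleanly; everything else is bookkeeping around the constraints $\pi(H), \pi(G/K) \subseteq \pi_1$.
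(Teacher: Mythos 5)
The paper gives no proof of this lemma---it is imported from \cite{tabl14}---so the comparison can only be against what the statement requires. Most of your argument is sound and is the standard Gruenberg--Kegel-type reasoning: reading $\Gamma(G)$ off from $M(G)$ to get $\Gamma(G)=\Gamma(N)$, the order-preserving lift showing $\pi_e(K/H)\subseteq\pi_e(G)$, the reverse transfer of edges outside $\pi_1$ forcing each $\pi_i(G)$ with $i\geq 2$ to be a full connected component of $\Gamma(K/H)$, the conclusion $s(K/H)\geq s(G)$ via $2\in\pi(K/H)\cap\pi_1$, and all of part (b) via $C_G(K)\cap K\leq Z(K)=1$ together with the commuting-element trick to kill $C_G(K)$.

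There is, however, a genuine gap in the first assertion of (a). An odd order component of $N$ is the \emph{integer} $m_i(N)$, i.e.\ the $\pi_i(N)$-part of $|N|$, not the vertex set $\pi_i(N)$; your final sentence for (a) silently identifies the two. What you actually prove is that $\pi_i(G)=\pi_i(N)$ is a connected component of $\Gamma(K/H)$, and it is indeed immediate that the $\pi_i$-part of $|K/H|$ equals the $\pi_i$-part of $|G|$ (since $H$ and $G/K$ are $\pi_1$-groups). But nothing in your argument shows that the $\pi_i$-part of $|G|$ equals the $\pi_i$-part of $|N|$: the hypothesis is $M(G)=M(N)$, not $|G|=|N|$, and in general a group's order is not determined by the orders of its maximal abelian subgroups (extraspecial $p$-groups already show this). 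The integer equality is precisely what the paper needs downstream, e.g.\ when it sets $(q^2-q+1)/d$ equal to $p'$ or $p'+2$ in Case~1 of Theorem~\ref{d=1}, so it cannot be waved away. To close the gap, argue that for $i\geq 2$ the isolated Hall $\pi_i$-subgroup of the simple group $N$ is abelian (this holds for every nonabelian simple group with disconnected prime graph, by inspection of the components listed in \cite[Tables 1--3]{ijacb}), hence is self-centralizing and therefore a maximal abelian subgroup, making $m_i(N)$ the unique $\pi_i$-number in $M(N)$; on the other side, the Hall $\pi_i$-subgroup $X$ of $G$ is nilpotent and isolated, lies in $K$ with $X\cap H=1$, so it embeds as a Hall $\pi_i$-subgroup of the simple group $K/H$ and is therefore abelian, hence $X$ is itself a maximal abelian subgroup of $G$ and $|X|\in M(G)=M(N)$ forces $|X|=m_i(N)$, while $|X|$ is the $\pi_i$-part of $|K/H|$.
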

\begin{Lemma}\label{admartabe-1}
{\rm (\cite{tabl14})} Let $G$ be a finite group, $s(G)\geq 2$ and $N\unlhd G$. If $N$ is a $\pi_1$-group, $a_1,a_2,...,a_r$ are the odd order components of $G$, then $a_1 a_2\cdots a_r$ is a divisor of $|N|-1$.
\end{Lemma}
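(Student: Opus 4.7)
The plan is to reduce the divisibility $a_1\cdots a_r \mid |N|-1$ to a local fact about each prime $p$ dividing some $a_i$: namely, that a Sylow $p$-subgroup $P$ of $G$ acts on $N$ by conjugation so that every nontrivial $P$-orbit on $N$ has length exactly $|P|$. Granting that, one gets $|P|=(a_i)_p$ dividing $|N|-1$; since the $p$-parts $(a_i)_p$ for $p\in\pi(a_i)$ are pairwise coprime with product $a_i$, and since the $a_i$ themselves are pairwise coprime (their prime supports being disjoint prime graph components), the product $a_1\cdots a_r$ divides $|N|-1$.

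The heart of the matter is the claim: if $g\in G$ has prime order $p$ with $p\in\pi_i$ for some $i\geq 2$, then $C_N(g)=1$. Suppose on the contrary that $1\neq h\in C_N(g)$. Because $h\in N$ and $N$ is a $\pi_1$-group, every prime divisor of $|h|$ lies in $\pi_1$; after replacing $h$ by a suitable power we may assume $|h|=q$ for some $q\in\pi_1$. Then $g$ and $h$ commute with coprime orders $p$ and $q$, so $gh$ has order $pq$. This forces $p\in\pi_i$ to be adjacent to $q\in\pi_1$ in $\Gamma(G)$, contradicting that $\pi_1$ and $\pi_i$ are distinct connected components.

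To upgrade the fixed-point claim to all of $P$: if $1\neq x\in P$ has order $p^k$, then $x^{p^{k-1}}$ has order $p$, so $C_N(x)\subseteq C_N(x^{p^{k-1}})=1$. Hence for every $y\in N\setminus\{1\}$ the stabilizer $P_y$ is trivial, every nontrivial $P$-orbit on $N$ has size $|P|$, and $|P|\mid |N|-1$. The only real obstacle in the whole argument is the fixed-point claim; once that is in hand, the orbit-counting and the coprime assembly at the top are routine. What makes the claim go through is precisely the $\pi_1$-group hypothesis on $N$, which simultaneously forces $p\nmid |N|$ (so the action of $P$ on $N$ is coprime) and ensures that any putative fixed element of $N$ contributes a prime from $\pi_1$ that cannot be adjacent to $p\in\pi_i$ without collapsing two distinct components of $\Gamma(G)$.
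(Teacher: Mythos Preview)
Your argument is correct and is, in fact, the standard proof of this well-known lemma. However, note that the paper does not supply its own proof of this statement: it is quoted verbatim from Chen~\cite{tabl14} and used as a black box. So there is no ``paper's proof'' to compare against here; you have simply reconstructed the classical argument behind the cited result. The essential mechanism --- that any $p$-element with $p\in\pi_i$, $i\geq 2$, centralizes only the identity in the $\pi_1$-group $N$ (else an edge would join $\pi_1$ to $\pi_i$), whence each such Sylow $p$-subgroup acts semiregularly on $N\setminus\{1\}$ and $|P|\mid |N|-1$ --- is exactly what underlies Chen's lemma, and your passage from individual $|P|\mid |N|-1$ to $a_1\cdots a_r\mid |N|-1$ via pairwise coprimality is the standard finish.
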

\begin{Lemma}\label{1417malle}
{\rm (\cite[Corollary 14.17]{malleb})} Let $G$ be connected
reductive with simply connected derived subgroup $[G,G]$, and
$s_1,\cdots s_r\in G$ mutually commuting semisimple elements of
finite order. Let $n$ be the number of $s_i$ whose order is not
prime to all torsion primes of $G$. If $n\leq 2$ then there exists a
maximal torus of $G$ containing all $s_i$.
\end{Lemma}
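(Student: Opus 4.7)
The plan is to reduce the problem by iterated centralizers until only two elements remain, then handle the residual case directly. For the base case $r=1$ I would use the classical fact that every semisimple element of a connected reductive group lies in some maximal torus. For $r=2$, I would argue as follows: by Steinberg's theorem on centralizers in groups with simply connected derived subgroup, $C_G(s_1)$ is connected, hence connected reductive; applying the $r=1$ case inside this centralizer produces a maximal torus $T\subseteq C_G(s_1)$ containing $s_2$; since $s_1\in Z(C_G(s_1))$ commutes with $T$ and maximal tori of connected reductive groups are self-centralizing, $s_1\in T$; and $T$, having the same rank as $G$, is a maximal torus of $G$.

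The inductive step for $r\geq 3$ rests on a second result of Steinberg: if $G$ is connected reductive with simply connected derived subgroup and $s\in G$ is semisimple of order prime to all torsion primes of $G$, then $[C_G(s),C_G(s)]$ is again simply connected, and the torsion primes of $C_G(s)$ are among those of $G$. Since the hypothesis $n\leq 2<r$ forces some $s_i$ (say $s_r$) to have order prime to the torsion primes of $G$, I would set $C:=C_G(s_r)$ and apply the inductive hypothesis inside the smaller connected reductive simply-connected-derived group $C$ to the commuting semisimple elements $s_1,\ldots,s_{r-1}$ (whose "bad count" is still at most $2$). This yields a common maximal torus $T'\subseteq C$ containing $s_1,\ldots,s_{r-1}$; the self-centralization argument used above then places $s_r$ in $T'$; and $T'$ is a maximal torus of $G$.

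The main obstacle I anticipate is not the inductive bookkeeping but the invocation of the structural theorems about centralizers, in particular the assertion that passing from $G$ to $C_G(s)$ preserves simple connectedness of the derived subgroup whenever the order of $s$ is coprime to every torsion prime. This is the technical heart of Chapter 14 of Malle--Testerman and ultimately depends on the detailed analysis, due to Steinberg, of the fundamental groups of centralizers in connected reductive groups. With that input in hand, the precise form of the hypothesis $n\leq 2$ is exactly what permits enough centralizer reductions to land in the base case $r=2$ while keeping the ambient group simply-connected-derived; without it, the induction would collapse after the first "bad" reduction.
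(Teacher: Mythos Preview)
The paper does not supply its own proof of this lemma; it is quoted verbatim from Malle--Testerman \cite[Corollary 14.17]{malleb}. The only hint the paper gives is later, inside the proof of Theorem~\ref{malle}, where it remarks that ``Lemma~\ref{1417malle} follows from \cite[Theorem 14.16]{malleb} by intersecting centralizers.'' Your sketch is precisely this centralizer-intersection argument fleshed out: Steinberg connectedness gives the base case $r\leq 2$, and for $r\geq 3$ you peel off an $s_i$ of order prime to the torsion primes so that the derived subgroup of its centralizer stays simply connected (this is exactly the content of \cite[Theorem 14.16]{malleb}), then recurse. The bookkeeping---that the torsion primes of $C_G(s)$ lie among those of $G$, that the bad count does not increase, and that a maximal torus of a full-rank reductive subgroup is maximal in $G$---is all correct as you state it. So your proposal is sound and is essentially the argument Malle--Testerman give; there is nothing further to compare against in the present paper.
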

\begin{Lemma}\label{cres1}
{\rm (\cite{cres})} The equation $p^m-q^n=1$, where $p$ and $q$ are primes
and $m$, $n>1$, has only one solution, namely $3^2-2^3=1$.
\end{Lemma}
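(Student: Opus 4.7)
The plan is to carry out an elementary parity analysis. Since $p^m-q^n=1$, these two quantities have opposite parities, which forces exactly one of $p,q$ to equal $2$. I will treat the two resulting cases separately.

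In the case $p=2$, we have $q^n=2^m-1$ with $q$ odd and $m,n\geq 2$. If $n$ is even, writing $n=2k$ yields $q^n=(q^k)^2\equiv 1\pmod 4$, whereas $2^m-1\equiv 3\pmod 4$ for $m\geq 2$, a contradiction. If $n$ is odd (and $>1$), I factor
\[
 2^m=q^n+1=(q+1)\bigl(q^{n-1}-q^{n-2}+\cdots-q+1\bigr).
\]
The second factor is a sum of $n$ odd terms with alternating signs, hence odd, and is at least $q^2-q+1>1$. But a power of $2$ cannot have an odd divisor greater than $1$, so no solution arises in this case.

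In the case $q=2$, we have $p^m-1=2^n$ with $p$ odd and $m,n\geq 2$. If $m$ is odd, the factorization $p^m-1=(p-1)(1+p+\cdots+p^{m-1})$ produces a second factor that is a sum of $m$ odd terms, hence odd and greater than $1$, once again impossible. Thus $m$ must be even; writing $m=2k$ I obtain
\[
 2^n=(p^k-1)(p^k+1),
\]
so $2^n$ is expressed as the product of two powers of $2$ differing by $2$. The only such pair is $(2,4)$, forcing $p^k=3$, and hence $p=3$ and $k=1$. Substituting back yields $m=2$, $2^n=8$, and $n=3$, giving the unique solution $3^2-2^3=1$.

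The only step demanding any real care is the observation that no two powers of $2$, both at least $4$, differ by only $2$: this is immediate, since if the smaller power is $2^a\geq 4$ then the next power $2^{a+1}=2\cdot 2^a\geq 8$ is already more than $2$ larger. The entire argument stays within modular arithmetic and elementary polynomial factorizations, with no need for deep Diophantine input; restricting both bases to be prime sidesteps the intricate solutions that appear in the general Catalan equation.
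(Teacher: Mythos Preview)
Your proof is correct. The parity reduction to one base equal to $2$, followed by the factorizations $q^n+1=(q+1)(q^{n-1}-\cdots+1)$ and $p^m-1=(p-1)(p^{m-1}+\cdots+1)$, handles every case cleanly; the final observation that $(p^k-1)(p^k+1)=2^n$ forces the pair $(2,4)$ is exactly the right endgame.

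The paper itself does not prove this lemma at all: it is stated as a quotation from Crescenzo's 1975 paper and used as a black box in the preliminaries. So there is no ``paper's approach'' to compare against. Your argument is entirely self-contained and elementary, which is a genuine bonus relative to merely citing the literature. Crescenzo's paper treats a broader family of equations (as reflected in the companion Lemma~\ref{cres2} about $p^m-2q^n=\pm1$), and for those generalizations one does need more machinery; but for the special case $p^m-q^n=1$ with both bases prime, your direct argument is both sufficient and preferable.
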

\begin{Lemma}\label{cres2}
{\rm (\cite{cres})} With the exceptions of the relations $(239)^2-2(13)^4=-1$ and
$3^5-2(11)^2=1$ every solution of the equation $$p^m-2q^n=\pm 1;\ p,\ q\  prime;\  m,n>1$$ has exponents $m=n=2$; i.e. it comes from a unit $p-q\cdot 2^{1/2}$ of the quadratic field
$Q(2^{1/2}$) for which the coefficients $p$ and $q$ are primes.
\end{Lemma}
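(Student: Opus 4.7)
The plan is to prove this Diophantine assertion by case analysis on the parities of $m$ and $n$ and the sign in $p^m-2q^n=\pm 1$, combined with the norm-form interpretation in $\mathbb{Z}[\sqrt{2}]$ that accounts for the $m=n=2$ family.

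I would first dispose of the case $q=2$: the equation becomes $p^m = 2^{n+1}\pm 1$ with $p$ odd and $m,n>1$, and Lemma~\ref{cres1} (applied to $p^m - 2^{n+1} = \pm 1$) yields only $3^2 - 2^3 = 1$, i.e.\ $m=n=2$. Assume henceforth that $q$ is odd, so $p$ is also odd. If $m=2s$ is even, then $p^{2s}\mp 1 = (p^s-1)(p^s+1)$ expresses $2q^n$ as a product of two consecutive even integers with greatest common divisor $2$, forcing a factor of $4$ on the right-hand side; but $2q^n$ is only $2$-adically a power of $2$ when $q$ is odd, a contradiction unless $s=1$. We are thereby reduced (after the analogous consideration on the other parity) to $p^2 - 2q^2 = \pm 1$: the norm equation for $\mathbb{Q}(\sqrt{2})$, whose solutions are precisely the pairs $(p,q)$ such that $p+q\sqrt{2}$ is a unit in $\mathbb{Z}[\sqrt{2}]$, and imposing primality of $p$ and $q$ produces exactly the family recorded in the statement.

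When $m$ is odd I would write $p^m \mp 1 = (p\mp 1)\bigl(p^{m-1} \pm p^{m-2} + \cdots + 1\bigr)$; the second factor is a sum of an odd number of odd terms, hence itself odd, while its greatest common divisor with $p\mp 1$ divides $m$. Setting these two almost-coprime factors equal (up to a single factor of $2$) to powers of $q$, I would apply Zsygmondy's theorem on primitive prime divisors of $p^m\mp 1$: for $m\geq 3$ any such primitive prime must coincide with $q$, and the resulting divisibility constraints combined with elementary size estimates bound $m$ severely. A parallel argument — this time factoring $q^n\pm 1$ over $\mathbb{Z}[\sqrt{\pm 2}]$ when $n$ is even and $p$ splits in the relevant quadratic order — handles the case of odd $m$ but even $n$. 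The sporadic relations $3^5 - 2\cdot 11^2 = 1$ and $239^2 - 2\cdot 13^4 = -1$ then emerge within these subcases (the first with $m=5$, the second with $n=4$) and must be verified as the only such solutions by explicit computation after the parametric families have been exhausted.

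I expect the \emph{main obstacle} to be exactly this mixed-parity analysis with $\min(m,n)\geq 3$, where the Zsygmondy primitive-prime information has to be married to the cyclotomic factorization in order to rule out all instances other than the two sporadic ones. The subsidiary equations encountered (such as $p^3 - 1 = 2q^n$ or $2q^4 - 1 = p^2$) are themselves of Nagell--Ljunggren or Catalan type, and an elementary treatment in the style of Crescenzo requires careful descent in $\mathbb{Z}[\sqrt{2}]$ or $\mathbb{Z}[\sqrt{-2}]$ on the residue classes of $p$ modulo $8$ for which $p$ splits; a modern proof could alternatively invoke the full strength of Mihailescu's theorem. Either way, verifying the two exceptional solutions while excluding all other possibilities is the technical heart of the lemma.
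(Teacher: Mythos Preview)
The paper does not prove this lemma: it is stated with the citation \cite{cres} and no proof is given, so there is nothing in the paper to compare your attempt against. Crescenzo's result is quoted as an external number-theoretic input, exactly like Zsigmondy's theorem in Lemma~\ref{zsig}, and the authors use it as a black box.

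As for your sketch itself, it is a reasonable outline of how one might attack such a Diophantine equation, but it is not a proof and contains at least one concrete gap. Your treatment of the even-$m$ case factors $p^{2s}-1=(p^s-1)(p^s+1)$ and derives a $2$-adic contradiction, but this handles only the sign $p^m-2q^n=+1$; when the sign is $-1$ you need $2q^n=p^{2s}+1$, which does \emph{not} factor as a difference of squares and is only $2\pmod 4$, so no contradiction arises. Indeed the sporadic solution $239^2-2\cdot 13^4=-1$ has $m=2$ even, so even exponent $m$ cannot be reduced to $s=1$ by parity alone in that sign. You correctly identify the mixed-parity case as the heart of the matter, but the phrases ``the resulting divisibility constraints combined with elementary size estimates bound $m$ severely'' and ``must be verified as the only such solutions by explicit computation'' are exactly where Crescenzo's actual work lies; what you have written is a plan, not an argument. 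For the purposes of this paper none of that is needed: simply cite \cite{cres} as the authors do.
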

\begin{Lemma}\label{zsig}
{\rm(Zsigmondy's Theorem)} {\rm(\cite{zsig})} Let $p$ be a prime
and let $n$ be a positive integer. Then one of the following
holds:

(i) There is a primitive prime $p'$ for $p^n-1$, that is, $p'\mid
(p^n-1)$ but $p'\nmid (p^m-1)$, for every $1\leq m<n$,

(ii) $p=2$, $n=1$ or 6,

(iii) $p$ is a Mersenne prime and $n=2$.
\end{Lemma}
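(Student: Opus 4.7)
The plan is to exploit the cyclotomic factorization $p^n - 1 = \prod_{d \mid n}\Phi_d(p)$, reducing the theorem to producing a prime $\ell$ dividing the top factor $\Phi_n(p)$ for which $\mathrm{ord}_\ell(p) = n$, since such an $\ell$ is, by definition, a primitive prime divisor of $p^n-1$. Throughout, $\Phi_n(x) \in \mathbb{Z}[x]$ denotes the $n$-th cyclotomic polynomial.

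The first step is a dichotomy: every prime $\ell \mid \Phi_n(p)$ either satisfies $\mathrm{ord}_\ell(p) = n$ (in which case we are done) or divides $n$. Indeed, writing $d := \mathrm{ord}_\ell(p)$, which automatically divides $n$, if $d < n$ then $\ell$ divides both $\Phi_n(p)$ and $\Phi_d(p)$; since the resultant of $\Phi_n$ and $\Phi_d$ in $\mathbb{Z}[x]$ divides $n$, this forces $\ell \mid n$. The second step is a lifting-the-exponent computation showing that for such an $\ell$ dividing both $n$ and $\Phi_n(p)$, the valuation $v_\ell(\Phi_n(p))$ equals $1$, apart from a handful of degenerate configurations around $\ell = 2$. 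Consequently $\Phi_n(p)$ fails to carry a primitive prime divisor only when $\Phi_n(p)$ is itself bounded by the largest prime divisor of $n$.

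The third step supplies the needed size estimate: from the product expansion $\Phi_n(p) = \prod_{\gcd(k,n) = 1}(p - \zeta_n^k)$ each factor has modulus at least $p - 1$, so $\Phi_n(p) \geq (p-1)^{\varphi(n)}$, which dominates any prime divisor of $n$ outside a short list of small $(p,n)$. A direct inspection of those residual cases pins down the three stated exceptions: $n = 1, p = 2$ is trivial; $n = 6, p = 2$ gives $\Phi_6(2) = 3$, already a divisor of $2^2 - 1$; and $n = 2$ with $p$ Mersenne gives $\Phi_2(p) = p+1$ a pure power of $2$, whose only prime $2$ already divides $p - 1$. The main obstacle is the multiplicity calculation in the second step, because the $\ell = 2$ anomaly in lifting the exponent must be tracked carefully for the Mersenne exception to drop out cleanly in the final case analysis.
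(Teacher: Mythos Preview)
The paper does not supply its own proof of this lemma; it is quoted as a classical result with a citation to Zsigmondy's original 1892 paper, so there is nothing in the paper to compare your argument against. Your outline is the standard modern route via cyclotomic polynomials (essentially the Birkhoff--Vandiver argument): factor $p^n-1=\prod_{d\mid n}\Phi_d(p)$, show that any non-primitive prime $\ell\mid\Phi_n(p)$ must in fact be the \emph{largest} prime divisor of $n$ (this sharpening of your ``$\ell\mid n$'' is what makes the size comparison work) with $v_\ell(\Phi_n(p))=1$ away from $\ell=2$, and then contrast this against a lower bound for $\Phi_n(p)$. One point to tighten: the bound $\Phi_n(p)\ge (p-1)^{\varphi(n)}$ is vacuous at $p=2$, so for $p=2$ you will need a slightly better estimate (for instance $\Phi_n(2)\ge 2^{\varphi(n)}-1$ for $n>1$, or a direct product bound using $|2-\zeta|\ge 1$ with strict inequality for all but two roots) before the residual case-check isolates $n=1,6$. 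With that adjustment and the $\ell=2$ bookkeeping you already flag, the sketch goes through.
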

\begin{Lemma}\label{bmm}
{\rm (\cite{ire})} Let $a>1$, $m$ and $n$ be positive integers. Then $(a^{n}-1,a^{m}-1)=a^{(m,n)}-1$.
\end{Lemma}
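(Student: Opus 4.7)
The plan is to prove this classical identity by a direct Euclidean-algorithm argument on the exponents, establishing divisibility in both directions. Write $d=(m,n)$.

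First I would establish the easy direction $a^{d}-1 \mid \gcd(a^{m}-1, a^{n}-1)$. Since $d \mid m$ we can write $m=dk$ for some positive integer $k$, and then the factorization
\[
a^{m}-1 = (a^{d})^{k}-1 = (a^{d}-1)\bigl((a^{d})^{k-1}+(a^{d})^{k-2}+\cdots+1\bigr)
\]
shows $a^{d}-1 \mid a^{m}-1$. Symmetrically $a^{d}-1 \mid a^{n}-1$, so $a^{d}-1$ divides the gcd.

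For the reverse divisibility, the key reduction is the identity
\[
a^{m}-1 = a^{m-n}(a^{n}-1) + (a^{m-n}-1) \qquad (m \geq n),
\]
which, together with the elementary fact that $\gcd(x+ky, y)=\gcd(x,y)$, yields
$\gcd(a^{m}-1, a^{n}-1)=\gcd(a^{m-n}-1, a^{n}-1)$. This mirrors exactly the subtractive step $(m,n) \mapsto (m-n,n)$ of the Euclidean algorithm on the exponents. I would then perform induction on $m+n$: assuming the lemma for all smaller pairs, the right-hand gcd equals $a^{(m-n,n)}-1 = a^{(m,n)}-1$, completing the step. The base case $m=n$ is trivial.

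There is no real obstacle here — the only mild care required is to verify the reduction identity and to check that the base case of the induction (when one exponent becomes zero, or when $m=n$) is handled correctly; interpreting $a^{0}-1=0$ makes the chain of gcds terminate cleanly at $a^{d}-1$. No deep machinery is needed, and the argument is independent of any of the group-theoretic results cited earlier in the section.
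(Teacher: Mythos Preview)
Your argument is correct and is the standard proof of this classical number-theoretic identity. The paper itself does not supply a proof of this lemma: it is quoted as a known fact with a reference to Ireland and Rosen's textbook, so there is no in-paper argument to compare against. Your Euclidean-algorithm approach is exactly the one found in that reference, and the only minor remark is that the induction terminates more naturally when phrased as ``assume without loss of generality $m\ge n$; if $n\mid m$ the claim is immediate, otherwise reduce to $(m-n,n)$,'' which avoids having to interpret $a^{0}-1=0$.
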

\begin{Lemma}\label{xyu}
{\rm (\cite{ire})} Let $x,y,z$ be natural numbers such that $x\mid
z$, $y\mid z$ and $d=(x,y)$. Then $xy/d\mid z$.
\end{Lemma}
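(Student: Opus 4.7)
The plan is to exhibit $xy/d$ as the least common multiple of $x$ and $y$ and then observe that $z$, being a common multiple of both, must be divisible by it. Concretely, I would begin by normalizing: write $x = d a$ and $y = d b$ with $(a,b) = 1$, so that the quantity to be shown to divide $z$ becomes $xy/d = d a b$.

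Next I would use the hypothesis $x \mid z$ to write $z = d a k$ for some positive integer $k$, and then feed this into the hypothesis $y \mid z$: the divisibility $d b \mid d a k$ simplifies to $b \mid a k$. Since $(a,b)=1$, Euclid's lemma (itself a standard consequence of Bezout's identity, which the reader may consult in \cite{ire}) yields $b \mid k$. Writing $k = b m$ then gives $z = d a b m$, so $xy/d = d a b$ divides $z$, as required.

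An essentially equivalent alternative route, which avoids the preliminary normalization, is to invoke Bezout directly: write $d = s x + t y$ with $s,t \in \mathbb{Z}$ and multiply by $z$ to obtain $z d = s x z + t y z$. Because $y \mid z$ the summand $s x z$ is divisible by $xy$, and because $x \mid z$ the summand $t y z$ is also divisible by $xy$; hence $xy \mid z d$, and dividing both sides by $d$ gives $xy/d \mid z$.

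There is essentially no obstacle here, as this is a standard elementary number-theoretic identity; the only point that requires a moment's attention is the passage from $b \mid a k$ and $(a,b)=1$ to $b \mid k$, which must be explicitly justified by Euclid's lemma (equivalently, by Bezout). Both variants above accomplish this cleanly, and either would constitute an acceptable short proof.
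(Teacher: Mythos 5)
Your proof is correct; both variants are standard and complete, and the only delicate step (passing from $b\mid ak$ and $(a,b)=1$ to $b\mid k$) is properly justified. The paper itself gives no proof of this lemma --- it is quoted from \cite{ire} as a known fact --- so there is nothing to compare against beyond noting that your argument (identifying $xy/d$ as the least common multiple of $x$ and $y$) is exactly the expected one.
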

\begin{remark}\label{12}
(\cite{ire}) If $q$ is a natural number, $r$ is an odd prime and
$(q, r) = 1$, then by $e(r, q)$ we denote the smallest natural
number $m$ such that $q^m\equiv 1 \pmod{r}$. Given an odd $q$,
put $e(2, q) = 1$ if $q\equiv1 \pmod{4}$ and put $e(2, q) = 2$ if
$q\equiv -1 \pmod{4}$. Using Fermat's little theorem we can see
that if $r$ is an odd prime such that $r\mid (q^n-1)$, then $e(r,
q)\mid n$.
\end{remark}
\begin{Lemma}\label{torus}
{\rm(\cite[Lemma 1.2]{vasil})} Let $q$ be a prime power and
$d=(3,q+1)$. Then the orders of the maximal tori of the simple group
$\psu$, are as follows: $(q^2-1)/d$, $(q+1)^2/d$ and $(q^2-q+1)/d$.
\end{Lemma}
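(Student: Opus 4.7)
The plan is to derive the statement from the general classification of maximal tori in finite reductive groups specialized to the twisted type $^2 A_2$, and then to descend modulo the center. I would realize $\mathrm{SU}_3(q) = G^F$ with $G = \mathrm{SL}_3(\overline{\mathbb{F}}_q)$, where the Steinberg endomorphism $F$ is the composition of the $q$-power Frobenius with the graph automorphism $\sigma$ of the $A_2$ Dynkin diagram. By the Lang--Steinberg theorem, in the form used by Deligne--Lusztig and Carter, the $G^F$-conjugacy classes of $F$-stable maximal tori of $G$ correspond bijectively to $F$-conjugacy classes in the Weyl group $W = W(A_2) \cong S_3$, and the associated torus has order
\[
|T_w^F| = |\det(Fw - 1)|,
\]
where the determinant is taken on $X(T) \otimes \mathbb{Q}$ and $F$ acts on this space as multiplication by $q$ composed with $\sigma$.

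Since $\sigma$ acts on $W$ as conjugation by the longest element $w_0 = s_1 s_2 s_1$ (an inner automorphism of $W$), the map $x \mapsto x w_0$ puts $F$-conjugacy classes in bijection with the ordinary conjugacy classes of $S_3$; there are therefore three $F$-classes, and I would take representatives $w_0$, $1$, and $s_1$. A direct rank-$2$ computation in the basis of simple roots (with $\sigma$ given by the swap matrix) yields
\[
|T_{w_0}^F| = (q+1)^2, \qquad |T_1^F| = q^2 - 1, \qquad |T_{s_1}^F| = q^2 - q + 1,
\]
the three orders of maximal tori of $\mathrm{SU}_3(q)$.

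Finally, since the scalar center $Z = Z(\mathrm{SU}_3(q))$ (of order $d = (3, q+1)$) lies in every maximal torus of $\mathrm{SU}_3(q)$ --- central elements are contained in every maximal abelian subgroup --- the tori of $\psu = \mathrm{SU}_3(q)/Z$ have orders $(q+1)^2/d$, $(q^2-1)/d$, and $(q^2-q+1)/d$, as stated. The main delicacies of the argument are the bookkeeping of $F$-conjugacy classes under the twist by $w_0$ (so that the ``split twisted'' torus of order $(q+1)^2$ is correctly associated to the class of $w_0$ rather than of the identity) and the sign of $\sigma$'s action on $X(T)$; the latter is precisely what distinguishes the unitary case $^2 A_2(q)$ from the linear case $A_2(q)$, where the analogous three orders would be $(q-1)^2$, $q^2-1$, and $q^2+q+1$.
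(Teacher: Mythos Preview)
Your argument is correct: the parametrization of $F$-stable maximal tori by $F$-conjugacy classes in $W$, the reduction of twisted conjugacy in $S_3$ to ordinary conjugacy via $w\mapsto w w_0$, the three determinant computations on $X(T)\otimes\mathbb{Q}$, and the passage to $\psu$ by quotienting out the central subgroup of order $d$ are all sound. One could equivalently obtain the three orders from the $\mathrm{GU}_3$ picture (tori indexed by partitions of $3$, with orders $(q+1)^3$, $(q^2-1)(q+1)$, $q^3+1$, then divided by $q+1$ for $\mathrm{SU}_3$), which is just the Ennola substitution $q\mapsto -q$ in the $\mathrm{GL}_3$ list --- exactly the phenomenon you flag at the end.

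As to comparison with the paper: the paper does \emph{not} prove this lemma at all; it is stated with a bare citation to \cite[Lemma~1.2]{vasil} and used as a black box. What you have written is the standard derivation (\`a la Carter, or Deligne--Lusztig) that lies behind that reference, so there is nothing to contrast --- your proposal simply supplies the proof the paper omits.
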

\section{\bf The Main Results}
\begin{Lemma}\label{omegakh}
Let $L$ and $G$ be finite groups and let $G$ have a normal series
$1\unlhd H\unlhd K\unlhd G$. If $M(G)=M(L)$, then for every element
$m\in\pi_e(K/H)$, there exists an element $n\in M(L)$ such that
$m\mid n$. In particular, $m\mid |L|$.
\end{Lemma}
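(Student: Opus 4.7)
The plan is to reduce the statement to two elementary facts: (i) every element of $K/H$ lifts to an element of $G$ whose order it divides, and (ii) every element of a finite group lies in some maximal abelian subgroup.

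First, given $m\in\pi_e(K/H)$, I would choose $\bar x\in K/H$ with $|\bar x|=m$ and pick any preimage $x\in K\subseteq G$. Since the canonical projection $K\to K/H$ sends $\langle x\rangle$ onto $\langle\bar x\rangle$, we get $m=|\bar x|\mid |x|$. This is the only place where the normal series $1\unlhd H\unlhd K\unlhd G$ enters: it is just used to lift $\bar x$ to an element of $G$ whose order is a multiple of $m$.

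Second, the cyclic subgroup $\langle x\rangle$ is abelian, so by Zorn's lemma (or finiteness) it is contained in some maximal abelian subgroup $A$ of $G$. Then $|x|\mid |A|$, whence
\[
m\mid |x|\mid |A|\in M(G).
\]
Because $M(G)=M(L)$ by hypothesis, there is $n:=|A|\in M(L)$ with $m\mid n$, proving the first assertion. The "in particular" clause is then immediate, since any $n\in M(L)$ is the order of a subgroup of $L$ and hence divides $|L|$ by Lagrange's theorem.

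There is really no serious obstacle here: the argument is a two-line observation. The only minor pitfall to flag is that the order of the lift $x$ need not equal $m$ (it is only a multiple of $m$), but that is exactly what the divisibility statement in the lemma allows for, so the proof goes through verbatim.
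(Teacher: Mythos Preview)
Your proof is correct and follows essentially the same route as the paper's: lift an element of $K/H$ to $G$, embed the resulting cyclic (abelian) subgroup in a maximal abelian subgroup, and use $M(G)=M(L)$. The only cosmetic difference is that the paper passes to a suitable power of the lift to get an element of $G$ of order exactly $m$ (so that $m\in\pi_e(G)$), whereas you work directly with the lift and the divisibility $m\mid |x|$; both are fine.
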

\begin{proof}
By assumption $K/H$ has an element of order $m$, so $m\in \pi_e(G)$.
Therefore $G$ contains a cyclic subgroup of order $m$, which is
abelian. Hence $m$ is a divisor of some $n\in M(G)$. Since
$M(G)=M(L)$, we get the result and clearly, $m\mid |L|$.
\end{proof}
\begin{remark}\label{2a2graph}
Let $p$ be a prime number. Let $G=\psu$, where $q=p^{\alpha}>2$ and
$q$ is not a Fermat prime. Let $r_i$ be a primitive prime divisor of
$q^i-1$. We know that $|\psu|=q^3 (q^2-1) (q^3+1)/(3,q+1)$. Also
$\pi_1(G)=\pi(q(q^2-1))$ and $\pi_2(G)=\pi((q^3+1)/(q+1)(3,q+1))$.
 If $(q+1)_3=3$,
then $\rho(G)=\rho(p,G)=\{p,3,r_1\neq2,r_6\}$. If $(q+1)_3\neq 3$,
then $\rho(G)=\rho(p,G)=\{p,r_1\neq2,r_6\}$. Also if $q$ is odd,
then $\rho(2,G)=\{2,r_6\}$. {\rm (} see {\cite{atlas,vasil,cocliq}}
{\rm )}
\end{remark}
\begin{remark}\label{psl2,5}
Let $B$ be an abelian subgroup of ${\rm {PSU}}_{n}(q)$ such that
$(|B|,q)=1$. Then it is not always true that $|B|$ divides the order
of some maximal torus of ${\rm {PSU}}_{n}(q)$. For example, in the
simple group ${\rm {PSU}}_{2}(5)$, there is an abelian subgroup of
order $2^2$. On the other hand by \cite{atlas}, the orders of
maximal tori in ${\rm {PSU}}_{2}(5)$ are $2$ and $3$.
\end{remark}
\begin{theorem}\label{malle}
Let $G={\rm{PSU}}_n(q)$ and $d=(n,q+1)$. If $H$ is an abelian
subgroup of $G$ such that $(|H|,qd)=1$, then $|H|$ divides the order
of some maximal torus of $G$.
\end{theorem}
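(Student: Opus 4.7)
The plan is to lift $H$ from $\mathrm{PSU}_n(q)$ to the simply connected group $\mathrm{SU}_n(q)$, invoke Lemma~\ref{1417malle} inside the ambient algebraic group, and then pass to an $F$-stable maximal torus via a centralizer argument. To start, let $\mathbf{G}=\mathrm{SL}_n(\overline{\mathbb{F}}_q)$ carry a Frobenius endomorphism $F$ with $\mathbf{G}^F=\mathrm{SU}_n(q)$, and let $Z$ denote the centre of $\mathrm{SU}_n(q)$, of order $d$. I take $\widetilde{H}\leq\mathrm{SU}_n(q)$ to be the full preimage of $H$ under $\mathrm{SU}_n(q)\twoheadrightarrow\mathrm{PSU}_n(q)$, so $|\widetilde{H}|=d|H|$ and $\widetilde{H}'\leq Z$ (since $\widetilde{H}/Z\cong H$ is abelian). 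Because $(|H|,d)=1$, Schur--Zassenhaus furnishes a complement $H^{*}$ to $Z$ in $\widetilde{H}$, and $[H^{*},H^{*}]\leq H^{*}\cap Z=1$ shows that $H^{*}$ is abelian and isomorphic to $H$.

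Since $(|H^{*}|,q)=1$, every element of $H^{*}$ is semisimple in $\mathbf{G}$. The group $\mathbf{G}$ is connected reductive with simply connected derived subgroup (itself) of type $A_{n-1}$, and this type has no torsion primes, so the parameter ``$n$'' in Lemma~\ref{1417malle} equals $0$. The lemma therefore applies and yields a maximal torus $\mathbf{T}\leq\mathbf{G}$ with $H^{*}\leq\mathbf{T}$.

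The main technical point is that $\mathbf{T}$ need not be $F$-stable, and I would repair this as follows. Set $\mathbf{C}:=C_{\mathbf{G}}(H^{*})$. Because $\mathbf{G}=\mathrm{SL}_n$ is simply connected, centralizers of semisimple abelian subgroups are connected, so $\mathbf{C}$ is connected reductive; it is $F$-stable since $H^{*}\subseteq\mathbf{G}^F$ is fixed pointwise by $F$. From $\mathbf{T}\leq\mathbf{C}$ one reads off $\mathrm{rank}(\mathbf{C})=\mathrm{rank}(\mathbf{G})$, so the maximal tori of $\mathbf{C}$ are precisely the maximal tori of $\mathbf{G}$ lying inside $\mathbf{C}$, and each of them contains $Z(\mathbf{C})\supseteq H^{*}$. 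Applying the Lang--Steinberg theorem inside $\mathbf{C}$ then produces an $F$-stable maximal torus $\mathbf{T}'$ of $\mathbf{C}$, which is at the same time an $F$-stable maximal torus of $\mathbf{G}$ containing $H^{*}$.

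To conclude, $\mathbf{T}'^{F}$ is a maximal torus of $\mathrm{SU}_n(q)$ containing both $H^{*}$ and $Z$, so its image $\mathbf{T}'^{F}/Z$ is a maximal torus of $\mathrm{PSU}_n(q)$ containing $H^{*}Z/Z=H$. Hence $|H|=|H^{*}|$ divides $|\mathbf{T}'^{F}|$, and together with $(|H|,d)=1$ this forces $|H|\mid |\mathbf{T}'^{F}|/d=|\mathbf{T}'^{F}/Z|$, the order of a maximal torus of $\mathrm{PSU}_n(q)$. The one step I expect to need care is the passage to an $F$-stable torus in Step~3, which rests on the connectedness of centralizers of semisimple subgroups in the simply connected group $\mathrm{SL}_n$.
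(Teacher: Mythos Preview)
Your proof is correct and follows essentially the same route as the paper: lift $H$ to an abelian $H^{*}\leq\mathrm{SU}_n(q)$ via the coprimeness $(|H|,d)=1$, apply Lemma~\ref{1417malle} inside the ambient simply connected algebraic group (type $A_{n-1}$ has no torsion primes), and pass to an $F$-stable maximal torus by exploiting the $F$-stability of $C_{\mathbf{G}}(H^{*})$. Your write-up is in fact more explicit than the paper's on the key step---you spell out that $\mathbf{C}=C_{\mathbf{G}}(H^{*})$ is connected of full rank and invoke Lang--Steinberg to extract an $F$-stable maximal torus containing $H^{*}\subseteq Z(\mathbf{C})$, whereas the paper simply asserts that the torus from Lemma~\ref{1417malle} can be taken $F$-stable because it arises by intersecting $F$-stable centralizers.
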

\begin{proof}
By assumption, $H\cong L/N$, for some subgroup $L$ of ${\rm
SU}_n(q)$, where $N=Z({\rm SU}_n(q))$. Since $N$ and $L/N$ are
abelian, $L$ is solvable. In addition $(|L/N|,|N|)=1$. Therefore
there exists a Hall subgroup $H^{*}\leq L$ of order $|L/N|$. Thus
$L=H^*N$ and hence $H^*\cong L/N\cong H$, which implies that there
exists an abelian subgroup of ${\rm SU}_n(q)$ of order $|H|$.

So $H^*$ is a preimage of $H$ inside ${\rm SU}_n(q)$ which is
abelian and of order coprime to $q$, i.e. all of its elements are
semisimple. Now ${\rm SU}_n(q)$ is a subgroup of ${\rm SU}_n(K)$,
where $K$ is the algebraic closure of ${\rm{GF}}(q^2)$. So by Lemma
\ref{1417malle}, $H$ lies in a maximal torus $T$ of ${\rm SU}_n(K)$.
Note that $T$ is a maximal torus of ${\rm SU}_n(K)$ not of the
finite group.

Now we claim that $T$ is $F$-stable, where $F$ is the Frobenius
endomorphism associated to the finite group of Lie type.

The key point is to observe that Lemma \ref{1417malle} follows from
\cite[Theorem 14.16]{malleb} by intersecting centralizers. Now if
$g$ is in ${\rm SU}_n(q)$, then $g$ is $F$-stable, where $F$ is the
relevant Frobenius endomorphism. Clearly the centralizer of $g$ will
also be $F$-stable. Now intersecting two $F$-stable subgroup results
in an $F$-stable subgroup and so the torus one obtains in Lemma
\ref{1417malle}, is also $F$-stable.

Therefore $H$ lies in $T\cap G^F$, where $G^F={\rm SU}_n(q)$ and so
we can conclude that $H$ lies in a maximal torus of ${\rm SU}_n(q)$.

The above argument shows that whenever $(|H|,d)=1$, then the claim
is true, because there will be a preimage of the same order in ${\rm
SU}_n(q)$, and hence is abelian.
\end{proof}
\begin{theorem}\label{d=1}
Suppose that $q=p^{\alpha}>2$, $q\neq 9$ and $q$ is not a Fermat
prime. If $G$ is a finite group such that $M(G)=M(\psu)$, then $G$
has a unique nonabelian composition factor which is isomorphic to
$\psu$.
\end{theorem}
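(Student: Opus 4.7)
My plan is first to show that $\Gamma(G)$ is disconnected, then apply Lemma \ref{ijac} and eliminate the Frobenius and 2-Frobenius options, and finally identify the simple quotient $K/H$ in case (c) by scanning the classification of finite simple groups with disconnected prime graph. Set $L:=\psu$, $d=(3,q+1)$, and $a=(q^2-q+1)/d$. By Remark \ref{2a2graph}, $L$ has $s(L)=2$ with odd order component equal to $a$, and by Lemma \ref{torus}, $a\in M(L)=M(G)$. Inspecting $M(L)$ via Lemma \ref{torus} together with the structure of the Sylow $p$-subgroup of $L$ shows that no element of $M(L)$ is simultaneously divisible by a prime of $\pi(a)$ and a prime of $\pi_1(L)=\pi(q(q^2-1))$; since $M(G)=M(L)$, this forces $s(G)\geq 2$ with $\pi(a)$ a prime-graph component of $G$, so Lemma \ref{ijac} applies.

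Next I would exclude the Frobenius and 2-Frobenius cases of Lemma \ref{ijac}. In the Frobenius case $G=KC$, Lemma \ref{froprime} gives the components $\pi(K)$ and $\pi(C)$, the divisibility $|K|\equiv 1\pmod{|C|}$, and Sylow constraints on $C$. Splitting on whether $\pi(C)=\pi(a)$ or $\pi(K)=\pi(a)$: if $\pi(C)=\pi(a)$, then $2\in\pi(K)$ and the odd divisibility $|K|\equiv 1\pmod{a}$, combined with Lemma \ref{admartabe-1} and the Zsigmondy-controlled factorisation of $q^i-1$ (Lemma \ref{zsig}), reduces to a Diophantine equation that Lemmas \ref{cres1}--\ref{cres2} rule out for $q>2$, $q\neq 9$, $q$ non-Fermat; if $\pi(K)=\pi(a)$, then $K$ is abelian by Lemma \ref{froprime} and hence $|K|\in M(G)$, which after comparison with $M(L)$ again yields a Diophantine contradiction via the same lemmas. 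The 2-Frobenius case $G=ABC$ is handled analogously, with $A$ a $\pi_1$-group and $a\mid |A|-1$ from Lemma \ref{admartabe-1}.

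In case (c) of Lemma \ref{ijac} we have a normal series $1\unlhd H\unlhd K\unlhd G$ with $H$ nilpotent, $K/H$ nonabelian simple, and $|G/K|$ a divisor of $|\mathrm{Aut}(K/H)|$. Lemma \ref{=ordercom}(a) yields that $a$ is an odd order component of $K/H$ and $s(K/H)\geq 2$. I would then run through \cite[Tables 1-3]{ijacb} listing the simple groups with an odd order component of the form $(q^2-q+1)/d$: for each candidate $S\not\cong \psu$, Lemma \ref{omegakh} forces every element order of $S$ to divide some $n\in M(L)$, so in particular $\pi(S)\subseteq\pi(L)$ and $|S|$ is tightly constrained relative to $|L|$. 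Zsigmondy (Lemma \ref{zsig}) pins down primitive prime divisors of $q^i-1$ for $i\in\{1,2,3,6\}$, and Lemmas \ref{cres1}--\ref{cres2} dispatch the residual Diophantine equations, leaving only $K/H\cong \psu$. From this the uniqueness of the nonabelian composition factor is immediate: $H$ is nilpotent (hence solvable) and $G/K\leq {\rm{Out}}(\psu)$ is solvable, so every other composition factor is cyclic. The main obstacle will be the case enumeration above, where each family (classical, exceptional, alternating, sporadic) in the tables must be matched against $a=(q^2-q+1)/d$; the exclusions $q\neq 9$ and $q$ non-Fermat enter precisely to prevent the exceptions in Lemma \ref{zsig}(ii)-(iii) from producing coincidental matches that would spoil the Zsigmondy step.
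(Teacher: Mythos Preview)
Your overall architecture matches the paper's: establish $s(G)\ge 2$ from $M(G)=M(L)$, invoke Lemma~\ref{ijac}, eliminate possibilities (a) and (b), and then in case (c) use Lemma~\ref{=ordercom} to identify $a=(q^2-q+1)/d$ as an odd order component of $K/H$ and scan the tables of simple groups with disconnected prime graph. The case-(c) sketch is fine in outline; the paper carries out an eleven-case analysis along exactly these lines, repeatedly combining Lemma~\ref{omegakh}, Lemma~\ref{torus}, Theorem~\ref{malle}, and the Diophantine Lemmas~\ref{cres1}--\ref{cres2}.

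Your elimination of the Frobenius and 2-Frobenius cases, however, has a real gap and also misses the paper's much simpler argument. You propose to extract a contradiction from divisibilities like ``$|K|\equiv 1\pmod{a}$'' or ``$a\mid |A|-1$'' via Lemmas~\ref{cres1}--\ref{cres2}, but the hypothesis $M(G)=M(L)$ does not give $|G|=|L|$, so neither $|K|$ nor $|A|$ is expressible in terms of $q$; there is simply no Diophantine equation to solve. (Note also that Lemma~\ref{froprime}(b) yields $|K|\equiv 1\pmod{|C|}$, and from $\pi(C)=\pi(a)$ you cannot conclude $|C|=a$.) The paper instead observes that, under the hypotheses $q\neq 9$ and $q$ not a Fermat prime, $q-1$ has an odd prime divisor $r_1$, and by Remark~\ref{2a2graph} one has $p\nsim r_1$ inside $\pi_1(G)$. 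This single non-edge kills the 2-Frobenius case at once (Lemma~\ref{ijac}(b) would force both components complete) and the solvable-complement Frobenius case (Lemma~\ref{ijac}(a) would force $\Gamma(C)$ complete); in the nonsolvable-complement Frobenius case the only missing edge in $\pi(C)=\pi_1$ must be $\{3,5\}$, forcing $\{p,r_1\}=\{3,5\}$, which fails since $2\mid q-1$. This also corrects your closing remark: the exclusions $q\neq 9$ and $q$ non-Fermat are not merely there to avoid coincidental Zsigmondy matches in the table scan; they are precisely what guarantees the existence of an odd $r_1$ (equivalently, that $q-1$ is not a power of $2$), and the non-adjacency $p\nsim r_1$ is invoked both here and repeatedly throughout the case analysis (see Cases~5--9 of the paper).
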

\begin{proof}
In the sequel, we denote by $r_i$ a primitive prime divisor of
$q^i-1$. By assumption, obviously we have $\Gamma(G)=\Gamma(\psu)$.
Also we consider $d=(3,q+1)$.

First we prove that $G$ is neither a Frobenius group nor a
2-Frobenius group. Suppose that $G=FC$ is a Frobenius group with
kernel $F$ and complement $C$. Since $F$ is nilpotent, so
$\Gamma(F)$ is complete. Now if $C$ is solvable, then by Lemma
\ref{froprime}, $\Gamma(C)$ is also complete, which is a
contradiction. So $C$ is nonsolvable, $\pi(F)=\pi_2(G)$ and
$\pi(C)=\pi_1(G)$. Using Lemma \ref{froprime}, there exists $H_0\leq
C$ such that $[C: H_0]\leq 2$, $H_0=Z\times {\rm SL}_2(5)$ and
$(|Z|,30)=1$. On the other hand by Lemma \ref{ijac}, all prime
numbers in $\pi(C)$ are adjacent except $\{3,5\}$. Since $p\nsim
r_1$ in $\pi_1(G)$, hence $p=3$ and the only possibility for $r_1$
is $5$ or $p=5$ and the only possibility for $r_1$ is $3$. Let $p=3$
and the only possibility for $r_1$ is $5$. Since $2\mid
(3^{\alpha}-1)$, so $2\in r_1$, which is a contradiction. If $p=5$
and the only possibility for $r_1$ is $3$, then similarly we get a
contradiction.

If $G$ is a 2-Frobenius group, then by Lemma \ref{ijac}, $\Gamma(G)$
has two complete connected components, which is a contradiction,
since $p\nsim r_1$ in $\Gamma(G)$ by Remark \ref{2a2graph}.

Therefore $G$ is neither a Frobenius group nor a 2-Frobenius group.
So by Lemma \ref{ijac}, $G$ has a normal series $1\unlhd H\unlhd
K\unlhd G$ such that $H$ and $G/K$ are $\pi_1$-groups, while $H$ is
nilpotent and $K/H$ is a nonabelian simple group.

By Lemma \ref{=ordercom}, we have $s(K/H)\geq s(\psu)=2$ and the odd
order component of $\psu$ is an odd order component of $K/H$. Now
using the classification of finite simple groups with disconnected
prime graph
in \cite[Tables 1--3]{ijacb}, we consider each possibility for $K/H$ and we prove that $K/H\cong \psu$.\\
{\bf Case~1.} Let $K/H\cong A_n$, where $n=p', p'+1,p'+2$ and $p'$
is an odd prime. We know that the odd order component(s) of $A_n$
are $p'$ or $p'+2$. By Remark \ref{2a2graph}, the odd order
component of $G$ is $(q^2-q+1)/d$.

First suppose that $d=1$. By Lemma \ref{=ordercom}, $q^2-q+1=p'$ or
$q^2-q+1=p'+2$.

If $q^2-q+1=p'$, then
$\pi(p'-2)=\pi(q^2-q-1)\subseteq\pi_1(G)=\pi(q(q^2-1))$. On the
other hand, $(q^2-q-1,q(q^2-1))=1$. Thus $p'=3$ and so $q=2$, which
is impossible.

If $q^2-q+1=p'+2$, then $\pi(p')=\pi(q^2-q-1)\subseteq\pi_1(G)$.
Similarly to the above $p'=1$, which is a contradiction.

Now Let $d=3$. Lemma \ref{=ordercom} implies that
$(q^2-q+1)/3=m\in\{p',p'+2\}$. We know that
$\pi(m-3)=\pi((q^2-q-8)/3)\subseteq\pi_1(G)=\pi(q(q^2-1))$. Since
$\pi(((q^2-q-8)/3,q(q^2-1)))\subseteq\{2,3\}$, so
$\pi(q^2-q-8)\subseteq\{2,3\}$. We can easily see that $9\nmid
(q^2-q-8)$. Hence $q^2-q-8=3\cdot 2^t$. If  $t=1$, then $q\notin
\Bbb{N}$. Thus $t>1$. Then
$\pi(m-4)=\pi((q^2-q-11)/3)\subseteq\pi(q(q^2-1))$. Similarly to the
above, $\pi(q^2-q-11)\subseteq\{3,11\}$. Therefore
$\pi((q^2-q-11)/3)=\pi((q^2-q-8-3)/3)=\pi((3\cdot
2^t-3)/3)=\pi(2^t-1)\subseteq\{3,11\}$. Since $3\in\pi(2^2-1)$ and
$11\in\pi(2^{10}-1)$, so by Lemma \ref{zsig}, for $t>10$, there
exists a prime divisor $s\neq3,11$ of $2^t-1$, which is a
contradiction. Also since $\pi(2^t-1)\subseteq\{3,11\}$, so $t=2$
and thus $q=5$, which is a contradiction, because we have $q-1\neq
2^k$.\\
{\bf Case~2.} Let $K/H\cong {\rm{PSU}}_{p'}(q')$, where $q'=p_0^{\beta}$ and $p'$ be an odd prime.\\
Thus by Lemma \ref{=ordercom} and Remark \ref{2a2graph},
$(q'^{p'}+1)/(q'+1)d'=(q^2-q+1)/d$, where $d'=(p',q'+1)$. So
$(q'^{p'}+1)/(q'+1)=d'(q^2-q+1)/d$. Subtracting 1 from both sides,
we have
\begin{equation}\label{abboz}
\frac{q'^{p'-1}-1}{q'+1}=\frac{d'q(q-1)+d'-d}{dq'}
\end{equation}
$\bullet$ Let $p'=3$. Therefore $(q'^2-q'+1)/d'=(q^2-q+1)/d$.

If $d=d'$, then $q'(q'-1)=q(q-1)$. Hence $q=q'$ and $K/H\cong \psu$.

If $d\neq d'$, then without less of generality let $d=3$ and $d'=1$.
So $q'^2-q'+1=(q^2-q+1)/3$. Thus $3q'(q'-1)=(q+1)(q-2)$. Since
$d=3$, so $(q+1,q-2)=3$. Hence $q'\mid(q+1)$ or $q'\mid(q-2)$.

If $q'\mid(q+1)$, then $q+1=q'a$, for some $a\in\Bbb{N}$. Therefore
by the last equation, we have $3q'(q'-1)=q'a(q'a-3)$ and so
$q'=3(a-1)/(a^2-3)$. Clearly $a>1$ and since $(a-1)/(a^2-3)\leq 2$,
so $q'\leq 5$. On the other hand $(3,q'+1)=1$, thus $q'\in\{3,4\}$.
If $q'=3$, then $q=5$, which is a contradiction.

If $q'=4$, then $q(q-1)=38$, which is impossible.

If $q'\mid(q-2)$, then similarly to the above, we get a
contradiction.\\
$\bullet$ Let $p'\geq 5$. First suppose that $d=1$.\\
{\bf (a)} Let $d'=(p',q'+1)=1$. Then by Lemma \ref{=ordercom},
\begin{equation}\label{zb1}
q'(q'^{p'-1}-1)=q(q-1)(q'+1).
\end{equation}
If $(q,q')\neq 1$, then by (\ref{zb1}), $q=q'$ and $p'=3$, which is
a contradiction.

If $(q,q')=1$, then put $k=(p'-1)/2$. Therefore by (\ref{zb1}),
$q'(q'^k-1)(q'^k+1)=q(q-1)(q'+1)$. Since $(q'^k-1,q'^k+1)\mid 2$, so
$q\mid(q'^k-1)$ or $q\mid(q'^k+1)$.

Let $q\mid(q'^k-1)$. Thus $q'^k-1=qt_0$, for some $t_0\in\Bbb{N}$.
Hence by (\ref{zb1}), $q't_0(qt_0+2)=(q-1)(q'+1)$. Consequently
$q(q't_0^2-1-q')=-q'(2t_0+1)-1$. If $t_0>1$, then the right-hand
side of the equality is negative and the left-hand side is positive,
which is a contradiction. So $t_0=1$ and $q=3q'+1$. On the other
hand, $q'^k-1=q$, i.e. $p_o^{k\beta}-p^{\alpha}=1$. Since $p'\geq
5$, so $k\geq 2$ and by Lemma \ref{cres1}, $\alpha=1$ or $q'=3$ and
$q=8$.

If $q'=3$ and $q=8$, then we get a contradiction since $d=1$.

If $\alpha=1$, then $p_o^{k\beta}=p+1$ and $p=3q'+1$. Since
$p=q\neq2$, thus $p_0=2$. Therefore $p=3\cdot 2^{\beta}+1$ and
$q'^k=3\cdot 2^{\beta}+2$. If $\beta>1$, then $q'^k=2(3\cdot
2^{\beta-1}+1)$, which is a contradiction. So $\beta=1$, $q'=2$,
$p'=7$ and $q=7$. Hence $11\in\pi(K/H)\backslash \pi(G)$, which is a
contradiction.

If $q\mid(q'^k+1)$, then similarly to the above we get a
contradiction.\\
{\bf{(b)}} Let $d'=(p',q'+1)=p'$.\\
$\blacktriangleright$ First suppose that $p=p_0$. Then
\begin{equation}\label{zb2}
(p^{3\alpha}+1)/(p^{\alpha}+1)=(p^{p'\beta}+1)/((p^{\beta}+1)p').
\end{equation}
Let $x$ be a primitive prime divisor of $(p^{2p'\beta}-1)$. So by
(\ref{zb2}), $x\mid (p^{3\alpha}+1)$ and thus $x\mid
(p^{6\alpha}-1)$. Therefore $p'\beta\mid 3\alpha$. Similarly we
conclude that $3\alpha \mid p'\beta$ and hence $3\alpha= p'\beta$.
Since $p'\geq 5$, so $\beta\leq \alpha$ and $q'\mid q$. Note that by
(\ref{abboz}), $(q'^{p'-1}-1)/(q'+1)=(p'q(q-1)+p'-1)/q'$. Then
$q'\mid (p'-1)$. On the other hand, $p'\mid (q'+1)$. So $q'=p'-1$,
$q'=2^{\beta}$ and $q=2^{\alpha}$. Thus by (\ref{zb2}),
$2^{2\alpha}-2^{\alpha}+1=(2^{p'\beta}+1)/(2^{\beta}+1)^2$, which
implies that
$$2^{p'\beta}=2^{2\alpha+2\beta}-2^{2\beta+\alpha}+2^{2\beta}+2^{2\alpha+\beta+1}-2^{\alpha+\beta+1}+2^{\beta+1}+2^{2\alpha}-2^{\alpha}.$$
If $\beta=1$, then $\alpha=3$ and $p'=9$, which is impossible.

Hence $\beta\neq1$ and so $\alpha=\beta+1$. But this is a
contradiction by the above equation.\\
$\blacktriangleright$ Therefore $p\neq p_0$. Note that by
\cite[Corollary 3]{spectra},
$\{(q'^{p'-1}-1)/p',(q'^{p'-2}-1)/p',(q'^{p'-3}-1)/p'\}\subseteq
\pi_{e}(K/H)$. So by Lemma \ref{omegakh}, $(q'^{p'-b}-1)/p'\mid
|\psu|$, for $b=1,2,3$. Since $(q'^{p'}+1)/((q'+1)p')=q^2-q+1$ and
$|\psu|=q^3 (q-1)(q+1)^2(q^2-q+1)$, hence $(q'^{p'-b}-1)/p'\mid
q^3(q-1)(q+1)^2$, where $b\in\{1,2,3\}$. Also
\begin{equation}\label{zb3}
q(q-1)=\frac{q'^{p'}-p'q'-p'+1}{p'(q'+1)}=\frac{q'^{p'-1}-q'^{p'-2}+q'^{p'-3}-\cdots+1-p'}{p'}.
\end{equation}
Therefore we can easily check that $((q'^{p'-1}-1)/p',q(q-1))\mid
(q'+1)(p'-1)/p'$, $((q'^{p'-2}-1)/p',q(q-1))\mid
(q'^2-p'q'+1-p')/p'$ and $((q'^{p'-3}-1)/p',q(q-1))\mid
(q'^3-p'q'+1-p')/p'$.

On the other hand, $((q'^{p'-1}-1)/p',(q'^{p'-2}-1)/p')\mid (q'-1)$,
$((q'^{p'-1}-1)/p',(q'^{p'-3}-1)/p')\mid (q'^2-1)$ and
$((q'^{p'-2}-1)/p',(q'^{p'-3}-1)/p')\mid (q'-1)$, by Lemma
\ref{bmm}. Consequently by Lemma \ref{xyu}, we get that
\begin{eqnarray*}
 A:=(\frac{q'^{p'-1}-1}{p' ((p'-1)(q'+1)/p')
(q'-1)(q'^2-1)}\times & \frac{q'^{p'-2}-1}{p'(
(q'^2-p'q'+1-p')/p')(q'-1)} \\
  \times \frac{q'^{p'-3}-1}{p' (
(q'^3-p'q'+1-p')/p')})\leq q^2(q+1)^2. &
\end{eqnarray*}
We claim that for $p'\geq 17$, $3q^2(q-1)^2\leq A$ and since $A\leq
q^2(q+1)^2$, we get that $2q^2-8q+2\leq 0$, which is a
contradiction, since $q\geq 4$.

Now we prove the claim. We know that
$q(q-1)=(q'^{p'}-p'q'-p'+1)/(p'(q'+1))\leq q'^{p'}/(p'(q'+1))$.
Hence $3q^2(q-1)^2\leq 3q'^{2p'}/(p'^2(q'+1)^2)\leq
3q'^{2p'-2}/p'^2$. In addition, easily we can see that
$q'^{p'-1}/((p'-1)q'^5)\leq (q'^{p'-1}-1)/((p'-1)(q'+1)^2(q'-1)^2)$,
$(q'^{p'-2}-1)/((q'-1)q'^2)\leq
(q'^{p'-2}-1)/((q'-1)(q'^2-p'q'+1-p'))$ and $q'^{p'-3}/q'^5\leq
(q'^{p'-3}-1)/(q'^3-p'q'+1-p').$ Consequently, $(q'^{p'-6}\times
q'^{p'-5}\times q'^{p'-8})/(p'-1)=q'^{3p'-19}/(p'-1)\leq A\leq
q^2(q+1)^2$. If $p'\geq 17$, then $q'^{2p'-2}/(p'-1)\leq
q'^{3p'-19}/(p'-1)$. Therefore by the above discussion and
(\ref{zb3}), we have
$$3q^2(q-1)^2\leq 3q'^{2p'-2}/p'^2\leq 3q'^{2p'-2}/(p'-1)^2\leq
q'^{2p'-2}/(p'-1)\leq q'^{3p'-19}/(p'-1)\leq A\leq q^2(q+1)^2$$
which is a contradiction. Thus $p'\in\{5,7,11,13\}$.

First suppose that $q>q'$. We know that
$(q'^{p'}+1)/((q'+1)p')=q^2-q+1$. Subtracting 1 from both sides, we
get that $(q'^{p'-1}-1)/(q'+1)=(p'q(q-1)+p'-1)/q'$. Put
$f=(p'q(q-1)+p'-1)_{p}$ and since $p\neq p_0$, so
$f=((p'q(q-1)+p'-1)/q')_p$. In addition, $q>q'$ and $p'\mid (q'+1)$.
So $p'-1\leq q'<q$ and hence $q\nmid (p'-1)$. Therefore
$f=(p'-1)_p$. Now by \cite{spectra}, ${\rm{PSU}}_{p'}(q')$ contains
an element of order $(q'^{p'-1}-1)/p'$. Since $p'\mid (q'+1)$, so
${\rm{PSU}}_{p'}(q')$ contains an element of order
$(q'^{p'-1}-1)/(q'+1)$. As we mentioned in the proof of Lemma
\ref{omegakh}, $\psu$ contains an abelian semisimple subgroup of
order $(p'q(q-1)+p'-1)/(fq')$. Now by Lemma \ref{malle},
$(p'q(q-1)+p'-1)/(fq')$ divides the order of a maximal torus $T$ of
$\psu$, which implies that $(p'q(q-1)+p'-1)/(fq')\mid
((p'q(q-1)+p'-1), |T|)$. Using Lemma \ref{torus} and easy
computation, we get that $((p'q(q-1)+p'-1), q^2+2q+1)\leq
9p'^2-6p'+1$, $((p'q(q-1)+p'-1), q^2-1)\leq 6p'-2$ and
$((p'q(q-1)+p'-1), q^2-q+1)=1$. Therefore $(p'q(q-1)+p'-1)/(fq')\leq
9p'^2-6p'+1$. Since $f=(p'-1)_p$, so $f\leq (p'-1)$ and $q>q'$
implies that $p'q(q-1)+p'-1<(9p'^2-6p'+1)(p'-1)q$.

Let $p'=5$. Hence $5q^2-5q+4<784q$. Thus $q<157$ is a prime power.
On the other hand by Lemma \ref{=ordercom},
$(q'^5+1)/(q'+1)=5(q^2-q+1)$. Now by GAP, we can see that the last
equation has no solution for prime powers $q<157$, which is a
contradiction.

If $p'\in\{7,11,13\}$, then we get a contradiction similarly.

Now suppose that $q'\geq q$. Let $p'=5$. Therefore by Lemma
\ref{=ordercom}, $(q'^{4}-1)/(q'+1)=(5q(q-1)+4)/q'$, which implies
that $q'^4-q'^3+q'^2-q'=5q^2-5q+4$. But this is a contradiction,
since $q'\geq q$. For $p'\in\{7,11,13\}$, we get a contradiction
similarly.

Also if $d=3$, then by a similar manner we get a contradiction.

If $K/H$ is isomorphic to ${\rm{PSL}}_{p'}(q')$, where $(p',q')\neq
(3,2),(3,4)$, then we get a contradiction similarly and for
convenience we omit the details of the
proof.\\
{\bf Case~3.}  Let $K/H\cong {\rm{PSU}}_{p'+1}(q')$, where $(q'+1)\mid (p'+1)$ and $q'=p_0^{\beta}$.\\
Let $d=1$. By Lemma \ref{=ordercom}, $(q'^{p'}+1)/(q'+1)=q^2-q+1$.
Thus
\begin{equation}\label{zb4}
q'(q'^{p'-1}-1)/(q'+1)=q(q-1).
\end{equation}
Therefore $q\mid q'$ or $q\mid (q'^{p'-1}-1)/(q'+1)$.\\
$\bullet$ Let $q\mid q'$. Therefore by (\ref{zb4}), $q=q'$ and
$(q^{p'-1}-1)/(q+1)=q-1$. Hence $p'=3$ and since $(q+1)\mid
(p'+1)$, so $q=3$, which is a contradiction, since $q$ is not a Fermat prime.\\
$\bullet$ Let $q\mid (q'^{p'-1}-1)/(q'+1)$. Now by \cite[Lemma
1.2]{vasil}, $(q'^{p'-1}-1)/(q'+1)$ divides the order of some
maximal torus of ${\rm{PSU}}_{p'+1}(q')$. Consequently, $p$ is
adjacent to every prime divisor of $(q'^{p'-1}-1)/(q'+1)$ in
$\Gamma(K/H)$. On the other hand by Remark \ref{2a2graph}, $p$ is
only adjacent to every primitive prime divisor of $q^2-1$ in
$\Gamma(G)$. Also by (\ref{zb4}), every prime divisor of
$(q'^{p'-1}-1)/(q'+1)$, is a prime divisor of $q-1$ or equals to
$p$. Hence $(q'^{p'-1}-1)/(q'+1)=q\cdot 2^t$, for some
$t\in\Bbb{N}\cup \{0\}$. By (\ref{zb4}), $q'\cdot 2^t=q-1$.

First suppose that $q$ is even. So $q'=q-1$, i.e.
$p_0^{\beta}=2^{\alpha}-1$, which implies that $q'=p_0=q-1$. Now
(\ref{zb4}) implies that $(q-1)^{p'-1}=q^2+1$, which is impossible.

Now let $q$ be odd and $q'$ be even. Therefore by
$(q'^{p'-1}-1)/(q'+1)=q\cdot 2^t$, $t=0$ and $q'=q-1$ by
(\ref{zb4}). Thus $q'^{p'-1}-1=(q'+1)^2$, i.e.
$2^{\beta(p'-1)}=2^{2\beta}+2^{\beta+1}+2$. So $\beta=0$, which is a
contradiction.

Let $q$ and $q'$ be odd. Since $(q'^{p'-1}-1)/(q'+1)=q\cdot 2^t$, so
by Lemma \ref{zsig}, we have $p'=3$ and $q'$ is not a Mersenne prime
or $p'=5$ and $q'$ is a Mersenne prime.

If $p'=3$ and $q'$ is not a Mersenne prime, then by (\ref{zb4}),
$q'(q'-1)=q(q-1)$. Therefore $q=q'$ and we get a contradiction as we
mentioned above.

If $p'=5$ and $q'$ is a Mersenne prime, then since $(q'+1)\mid
(p'+1)$, so $q'=2$ or $q'=5$. But in each case we get a
contradiction by (\ref{zb4}).

For $d=3$, we get the result similarly.

If $K/H \cong {\rm{PSL}}_{p'+1}(q')$, where $(q'-1)\mid (p'+1)$,
then we get a contradiction similarly.\\
{\bf Case~4.}  Let $K/H\cong {}^2G_{2}(q')$, where $q'=3^{2m+1}>3$.\\
The orders of the odd components of $K/H$ are $q'-\sqrt{3q'}+1$ and
$q'+\sqrt{3q'}+1$.

Let $d=1$. By Lemma \ref{=ordercom}, $q^2-q+1=3^{2m+1}\pm
3^{m+1}+1$. First suppose that $q^2-q+1=3^{2m+1}-3^{m+1}+1$.
Therefore
\begin{equation}\label{zb5}
q(q-1)=3^{m+1}(3^{m}-1).
\end{equation}
Therefore by (\ref{zb5}), $3^{m+1}\mid q$ or $3^{m+1}\mid (q-1)$. If
$3^{m+1}\mid (q-1)$, then $q\mid (3^m-1)$. Thus $3^{m+1}\leq
(q-1)\leq 3^m-2$, which is a contradiction. If $3^{m+1}\mid q$, then
by (\ref{zb5}), $(q-1)\mid (3^m-1)$. So $3^{m+1}\leq q\leq 3^{m}$,
which is impossible.\\
If $q^2-q+1=3^{2m+1}+3^{m+1}+1$, then similarly we get a
contradiction.

Now let $d=3$. So by Lemma \ref{=ordercom}, suppose that
$(q^2-q+1)/3=3^{2m+1}+3^{m+1}+1$. Hence
\begin{equation}\label{zb6}
(q-2)(q+1)=3^{m+2}(3^m+1).
\end{equation}
Since $(q-2,q+1)=3$, so by (\ref{zb6}), $3^{m+1}\mid (q+1)$ or
$3^{m+1}\mid (q-2)$.

If $3^{m+1}\mid (q+1)$, then by (\ref{zb6}), $(q-2)\mid (3(3^m+1))$.
Thus $q-5\leq 3^{m+1}$. Since $3^{m+1}\mid (q+1)$, so $q+1=3^{m+1}$
or $q+1=2\cdot 3^{m+1}$. If $q+1=3^{m+1}$, then by Lemma
\ref{cres1}, $q=8$. If $q+1=2\cdot 3^{m+1}$, then $2(q-5)\leq 2\cdot
3^{m+1}= q+1$. Therefore $q\in\{8,11\}$. If $q=8$, then we get a
contradiction by (\ref{zb6}). Let $q=11$. So by (\ref{zb6}), $m=1$.
But this is a contradiction since $3^{m+1}\mid (q+1)$.

If $3^{m+1}\mid (q-2)$, then by (\ref{zb6}), $(q+1)\mid 3(3^m+1)$.
Therefore $q-2=3^{m+1}$ and so $(q-2)^2=3q'$. Now by \cite[XI,
Theorems 13.2 and 13.4]{huppert3}, $q'-1\in \pi_e({}^2G_2(q'))$.
Consequently by Lemma \ref{omegakh}, $q'-1=(q^2-4q+1)/3\mid
|\psu|=q^3(q^2-1)(q^3+1)/3$. Now easily we can see that
$((q^2-4q+1)/3,q^3)=1$, $((q^2-4q+1)/3,(q^2-1)/3)\mid 2$,
$((q^2-4q+1)/3,q^2-q+1)\mid 3$ and $((q^2-4q+1)/3,q+1)\mid 6$.
Therefore $(q^2-4q+1)/3\mid 36$. Thus $q=5$, which is a
contradiction since $q$ is not a Fermat prime.

If $K/H$ is isomorphic to ${}^2F_4(2^{2m+1})$, where $2^{2m+1}>2$,
${}^2B_2(2^{2m+1})$, where $2^{2m+1}>2$ or $G_2(q')$, where $2<q'$,
then
we get a contradiction similarly.\\
{\bf Case~5.} Let $K/H\cong B_n(q')$, where $n=2^m\geq 4$ and $q'$
is odd.

Let $d=1$. So $(q'^n+1)/2=q^2-q+1$. Therefore $q(q-1)=(q'^n-1)/2$.
By \cite[Lemma 1.2]{vasil}, $(q'^n-1)/2$ is the order of a maximal
torus of $B_n(q')$. Hence $p$ is adjacent to every prime divisor of
$q-1$ in $\Gamma(G)$. But this is a contradiction since $p\nsim r_1$
in $\Gamma(G)$ by Remark \ref{2a2graph}.

Let $d=3$. Thus $(q'^n+1)/2=(q^2-q+1)/3$. So
$(q'^n-1)/2=(q^2-q-2)/3=(q-2)(q+1)/3$. Now by \cite[Corollary
5]{spectrao}, $B_n(q')$ contains some element of order $(q'^n-1)/2$.
Therefore by Lemma \ref{omegakh}, $(q-2)(q+1)/3\mid
|\psu|=q^3(q-1)(q+1)^2(q^2-q+1)/3$. Since $((q-2)/3,
(q^2-q+1)/3)=1$, so $(q-2)/3\mid q^3(q-1)(q+1)$. Now it is
straightforward to see that $((q-2)/3,q)\mid 2$, $((q-2)/3,q-1)=1$
and $((q-2)/3,q+1)\mid 3$. Consequently $(q-2)/3\mid 24$ and thus
$q\in\{5,8,11\}$. Since $q$ is not a Fermat prime, so $q\neq 5$. If
$q=8$, then by $(q'^n+1)/2=(q^2-q+1)/3$, $q'=37$ and $n=1$, which is
a contradiction, since $n$ is even. For $q=11$, we get a
contradiction similarly.

If $K/H$ is isomorphic to $C_{n}(q')$, where $n=2^m\geq 2$  or ${}^2D_n(q')$, where $n=2^m\geq 4$, then we get a contradiction similarly.\\
{\bf Case~6.} Let $K/H\cong A_1(q')$, where $3<q'\equiv \epsilon
\pmod{4}$, $\epsilon=\pm 1$. So the orders of odd components of
$A_1(q')$ are $q'$ and $(q'+\epsilon)/2$.

First suppose that $d=1$. So  by Lemma \ref{=ordercom},
$q^2-q+1\in\{q',(q'-1)/2,(q'+1)/2\}$.

If $q^2-q+1=(q'-1)/2$, then $q^2-q+2=(q'+1)/2$. Now by
\cite[Corollary 3]{spectra}, $A_1(q')$ contains some element of
order $(q'+1)/2$. Thus by Lemma \ref{omegakh}, $(q^2-q+2)\mid
|\psu|=q^3(q-1)(q+1)^2(q^2-q+1)$. By easy computation, we get that
$(q^2-q+2,q)\mid 2$, $(q^2-q+2,q-1)\mid 2$, $(q^2-q+2,q+1)\mid 4$,
$(q^2-q+2,q^2-q+1)=1$. Therefore $(q^2-q+2)\mid 256$. Therefore we
obtain that $q\in\{2,3\}$, which is a contradiction.

If $q^2-q+1=q'$, then $q(q-1)=q'-1$. By \cite[Lemma 1.2]{vasil},
$q'-1$ divides the order of some maximal torus in $A_1(q')$.
Therefore $p\sim r_1$ in $\Gamma(G)$, which is a contradiction by
Remark \ref{2a2graph}.

If $q^2-q+1=(q'+1)/2$, then similarly we get a contradiction.

Let $d=3$. Suppose that  $(q^2-q+1)/3=(q'-1)/2$. So
$(q^2-q+4)/3=(q'+1)/2$. Since by \cite[Corollary 3]{spectra},
$(q'+1)/2\in \pi_e(A_1(q'))$, so similarly to the above we get that
$(q^2-q+4)/3\mid 3^2\cdot 2^{10}$. Thus $q=4$, which is a
contradiction since $d=3$.

If $(q^2-q+1)/3=q'$, then $(q^2-q-2)/6=(q-2)(q+1)/6=(q'-1)/2$. So by
\cite[Corollary 3]{spectra}, $(q'-1)/2\in \pi_e(A_1(q'))$. Similarly
to the above, we obtain that $(q-2)/6\mid 24$. Consequently $q=8$
and $q'=19$. Now $5\in\pi(A_1(19))\backslash \pi({\rm{PSU}}_3(8))$,
which is a contradiction.

If $(q^2-q+1)/3=(q'+1)/2$, then similarly we get a contradiction.

If $K/H\cong A_1(q')$, where $q'>2$ is even, then we get a contradiction similarly.\\
{\bf Case~7.} Let $K/H\cong E_{6}(q')$, where $q'=p_0^{\beta}$.
Therefore $(q^2-q+1)/d=(q'^6+q'^3+1)/(3,q'-1)$.

Let $d=(3,q'-1)$. Thus $q'^3(q'^3+1)=q(q-1)$. By \cite[Lemma
1.2]{vasil}, $q'^3+1$ divides the order of some maximal torus of
$E_6(q')$. On the other hand, by \cite[Table~5]{vasil}, $p_0$ is
adjacent to every prime divisor of $q'^3+1$. Consequently, $p\sim
r_1$, which is a contradiction by Remark \ref{2a2graph}.

Let $d=1$ and $(3,q'-1)=3$. Hence $q'^3(q'^3+1)=3q^2-3q+2$. Since by
\cite{atlas}, $\pi(q'^3(q'^3+1))\subseteq
\pi_1(K/H)\subseteq\pi_1(G)=\pi(q)\cup\pi(q-1)\cup\pi(q+1)$.
Therefore if $p''$ is a prime divisor of $3q^2-3q+2$, then $p''=2$.
So $\pi(q'^3(q'^3+1))=\{2\}$, which is a contradiction.

Let $d=3$ and $(3,q'-1)=1$. Thus $(q^2-q+1)/3=q'^6+q'^3+1$. Put
$r:=q'^3$. Therefore $(q+1)(q-2)=3r(r+1)$. Since $(q+1,q-2)=3$, so
$r\mid (q+1)$ or $r\mid (q-2)$. If $r\mid (q+1)$, then $q+1=rw$, for
some $w\in \Bbb{N}$. Hence $r=3(w+1)/(w^2-3)$. For $w\geq 3$, we
have $r\leq 2$, which is impossible. If $w=2$, then $r=9$, which is
a contradiction. For $w=1$, $r=-3$, which is impossible. If $r\mid
(q-2)$, then we get a contradiction similarly.

If $K/H\cong {}^2E_6(q')$, where $q'>2$, then we get a contradiction
similarly.\\
{\bf Case~8.} Let $K/H\cong E_{8}(q')$, where $q'=p_0^{\beta}$.
Therefore
$(q^2-q+1)/d\in\{q'^8-q'^4+1,(q'^{10}+q'^5+1)/(q'^2+q'+1),(q'^{10}-q'^5+1)/(q'^2-q'+1),(q'^{10}+1)/(q'^2+1)\}$.\\
$\bullet$ Let $(q^2-q+1)/d=q'^8-q'^4+1$. Suppose that $d=1$.
Therefore $q'^4(q'^4-1)=q(q-1)$. By \cite[Lemma 1.3]{vasil},
$q'^4-1$ divides the order of some maximal torus of $E_8(q')$. On
the other hand by \cite{vasil}, $p_0$
 is adjacent to every prime divisor of $q'^4-1$ in $\Gamma(K/H)$. Hence $p\sim r_1$, which is a contradiction by Remark
 \ref{2a2graph}.

Therefore $d=3$ and $(q^2-q+1)/3=q'^8-q'^4+1$. Then
$(q-2)(q+1)=3q'^4(q'^4-1)$. Since $(q-2,q+1)=3$, so $q'^4\mid (q-2)$
or $q'^4\mid (q+1)$. If $q'^4\mid (q-2)$, then $q-2=q'^4t$, for some
$t\in\Bbb{N}$. Hence $q'^4(t^2-3)=-3(t+1)$. Consequently $t=1$ and
$q'^4=3$, which is impossible. If $q'^4\mid (q+1)$, then
$q+1=q'^4s$, for some $s\in\Bbb{N}$. Therefore
$q'^4=3(s-1)/(s^2-3)$. It is clear that $s\geq 2$ and so
$(s-1)/(s^2-3)\leq 1$. Thus $q'^4\leq 3$, which is a
contradiction.\\
$\bullet$ Now suppose that $(q^2-q+1)/d=(q'^{10}+1)/(q'^2+1)$.

Suppose that $d=1$. Therefore
\begin{equation}\label{zb7}
q(q-1)=q'^2(q'^4+1)(q'^2-1).
\end{equation}
If $p=p_0$, then $q=q'^2$ and $q'^4+1=1$, which is impossible. Thus
$p\neq p_0$. Let $q$ be even. Then $q'$ is odd and
$(q'^4+1,q'^2-1)=2$. Hence $q\mid 2(q'^2-1)$ or $q\mid 2(q'^4+1)$.

If $q\mid 2(q'^2-1)$, then $q'^2(q'^4+1)/2\leq q-1\leq 2(q'^2-1)-1$,
which is a contradiction.

If $q\mid 2(q'^4+1)$, then $2(q'^4+1)=qu$, for some $u\in\Bbb{N}$.
Therefore by (\ref{zb7}), $q'^2((4-u^2)q'^2+u^2)=2u-4$. For
$u\in\{1,2\}$, the last equation has no solution. Hence $u\geq 3$
and $$63\leq 9(9+u^2/(4-u^2))\leq
q'^2(q'^2+u^2/(4-u^2))=(2u-4)/(4-u^2)=-2/(2+u)<1,$$ which is a
contradiction. If $q$ is odd, then similarly we get a contradiction.

Therefore $d=3$. So
\begin{equation}\label{zb8}
q^2-q-2=(q+1)(q-2)=3q'^2(q'^4+1)(q'^2-1).
\end{equation}
Then $q$ is odd, because otherwise $(q^2-q-2)_2=2$, which implies
that $q'$ is odd by (\ref{zb8}). Consequently $8\mid (q'^2-1)$,
which is a contradiction. We know that $A_8(q')< E_8(q')$, by
\cite{embed}. By \cite{spectra}, $A_8(q')$ contains some element of
order $(q'^8-1)/(9,q'-1)$ and so some element of order
$(q'^4+1)(q'^2-1)/(9,q'-1)$. Let
$k=((q-2)/3,(q'^4+1)(q'^2-1)/(9,q'-1))$. Then $E_8(q')$ contains
some semisimple element of order $k$. Thus by Lemma \ref{omegakh},
$k\mid |\psu|$. It is easy to verify that $((q-2)/3,q-1)=1$,
$((q-2)/3,q+1)\mid 3$, $((q-2)/3,q)=1$ and
$((q-2)/3,(q^2-q+1)/3)=1$. Hence $k\in\{1,3,9\}$.

If $k=1$, then $(q'^4+1)(q'^2-1)\mid (q+1)$ by (\ref{zb8}). Thus
$q+1=v(q'^4+1)(q'^2-1)$, for some $v\in\Bbb{N}$. So by (\ref{zb8}),
$(q'^4-q'^2+1)v^2q'^2=3q'^2+3v+v^2$. For $v\in\{1,2\}$, the last
equation has no solution. Hence $v\geq 3$. Therefore
$v^2q'^2=(3q'^2+3v+v^2)/(q'^4-q'^2+1)<1+(3v+v^2)/(q'^4-q'^2+1)\leq
1+2v^2/(q'^4-q'^2+1)$, which is a contradiction. If $k\in\{3,9\}$,
then we get a contradiction similarly.\\
$\bullet$ Let $(q^2-q+1)/d=(q'^{10}-q'^5+1)/(q'^2-q'+1)$.

Suppose that $d=1$. Then
\begin{equation}\label{zb9}
q(q-1)=q'(q'^4-1)(q'^3+q'^2-1).
\end{equation}
If $(q,q')\neq 1$, then $q=q'$ by (\ref{zb9}), which is a
contradiction by (\ref{zb9}). Thus $(q,q')=1$. Since
$(q'^3+q'^2-1,q'^4-1)$ divides $5$, so $q\mid (q'^4-1)(5,q)$ or
$q\mid (q'^3+q'^2-1)(5,q)$. Suppose that $q\mid (q'^4-1)(5,q)$. If
$q$ is odd, then $q\mid 5(q'^2+1)$ or $q\mid 5(q'^2-1)$.

If $q\mid 5(q'^2+1)$, then $q'(q'^2-1)(q'^3+q'^2-1)/5\leq (q-1)$, by
(\ref{zb9}). Thus $q'(q'^2-1)(q'^3+q'^2-1)/5\leq 5(q'^2+1)-1$, which
is a contradiction.

If $q\mid 5(q'^2-1)$ or $q$ is even, then we get a contradiction
similarly.

Therefore $q\mid (q'^3+q'^2-1)(5,q)$. Hence $q'(q'^4-1)/(5,q)\mid
(q-1)$ and $q-1\leq (5,q)(q'^3+q'^2-1)-1$. If $(5,q)=1$, then
$q'(q'^4-1)\leq q'^3+q'^2-2$, which is a contradiction. Therefore
$(5,q)=5$ and $q'(q'^4-1)\leq 25(q'^3+q'^2-1)-5$. It is easy to
verify that $q'\leq 5$. Since $(5,q)=5$ and $(q,q')=1$, so $q'=4$.
Now by (\ref{zb9}) we get a contradiction.

Therefore $d=3$. Then
\begin{equation}\label{zb10}
(q+1)(q-2)=3q'(q'^4-1)(q'^3+q'^2-1).
\end{equation}
Since $A_1(q')\circ A_5(q')<A_8(q')<E_8(q')$, where $\circ$ denotes
the central product of groups, so \cite{spectra} implies that
$E_8(q')$ contains some element of order $p_0(q'^4-1)$. Put
$m=((q-2)/3,p_0(q'^4-1))$. Similarly to the above, we get that
$m\in\{1,3,9\}$.

Suppose that $p_0$ divides $m$. Then $p_0=3$, $q'\mid (q-2)$ and
$(q'^4-1)\mid (q+1)$, by (\ref{zb10}). So $q+1=(q'^4-1)w'$, for some
$w'\in\Bbb{N}$. Therefore by (\ref{zb10}), we have
$q'^4(w'^2-3)=w'^2+3w'+3q'^3-3q'$. If $w'=1$, then
 $$q'=-2/q'^3-3/2+3/(2q'^2)<1/q'^3+3/(2q'^2)<2,$$ which is a contradiction. Thus $w'>1$ and consequently
$$q'=(w'^2+3w')/q'^3(w'^2-3)+3q'(q'^2-1)/q'^3(w'^2-3)<1+3(w'+1)/(w'^2-3)+3/(w'^2-3).$$Since $1+3(w'+1)/(w'^2-3)+3/(w'^2-3)$
is equal to $13$, for $w'=2$ and is at most $10$, for $w'\geq 3$, so
$q'\in\{2,3,4,5,7,8,9\}$. But in each case, equation (\ref{zb10})
has no solution.

Therefore $p_0$ does not divide $m$. Consequently by (\ref{zb10}),
$q'\mid (q+1)$ and $(q'^4-1)\mid (q+1)$. So $q+1=q'(q'^4-1)y/m$, for
some $y\in\Bbb{N}$. Using (\ref{zb10}), we have
$(q'(q'^4-1)y/m)\times (q'(q'^4-1)y/m-3)=3q'(q'^4-1)(q'^3+q'^2-1)$.
Hence $q'^5=3q'^3m^2/y^2+3q'^2m^2/y^2+q'+3m(y-m)/y^2$. In
particular, $q'\mid 3(y-m)$.

Let $y\geq m$. Then $$q'^2\leq 3+3/q'+1/q'^2+3/q'^3\leq 3+3/2+1/4+3/8<6,$$ which implies that $q'=2$ and $q=32$.
 But this is a contradiction, since $61\in\pi(E_8(2))\backslash \pi({\rm{PSU}}_3(32))$ by \cite{atlas}.

Thus $m>y$ and so $m\neq 1$. Since $q'\mid 3(y-m)$, $q'\leq 24$.
Hence $q'\in\{2,3,4,5,7,8,9\}$. But in each case, equation
(\ref{zb10}) has no solution, which is a contradiction.

If $(q^2-q+1)/d=(q'^{10}+q'^5+1)/(q'^2+q'+1)$, then we get a
contradiction similarly.

If $K/H$ is isomorphic to $F_4(q')$ or ${}^3D_4(q')$, then we get a
contradiction similarly.\\
{\bf Case~9.} Let $K/H\cong C_{p'}(q')$, where $q'=2,3$, or
$B_{p'}(3)$, or $D_{p'+1}(q')$, where $q'=2,3$, or ${}^2D_n(2)$,
 where $n=2^m+1\geq 5$, or ${}^2D_{p'}(3)$,
 where $5\leq p'\neq 2^m+1$, or ${}^2D_{p'}(3)$, where $p'=2^m+1$,
 or $D_{p'}(q')$, where $p'\geq 5$ and $q'=2,3,5$.

We discuss only the case $K/H\cong C_{p'}(q')$, where $q'=2,3$,
because the other cases are similar. Lemma \ref{=ordercom} implies
that $(q'^{p'}-1)/(q'-1)=(q^2-q+1)/d$.

Suppose that $q'=2$. Then $2^{p'}-1=(q^2-q+1)/d$. Let $d=1$.
Subtracting 1 from both sides of this equality, we have
$2(2^{p'-1}-1)=q(q-1)$. By \cite[Lemma 1.2]{vasil}, $(2^{p'-1}-1)$
is the order of a maximal torus of $C_{p'}(q')$. Also by
\cite[Table~4]{vasil}, $q'=2$ is adjacent to all elements of
$\pi(K/H)$ except $u_{p'}$ and $u_{2p'}$ in $\Gamma(K/H)$, where
$u_i$ denotes a primitive prime divisor of $q'^i-1$. Hence $2$ is
adjacent to every prime divisor of $(2^{p'-1}-1)$ in $\Gamma(K/H)$.
Consequently $p\sim r_1$ in $\Gamma(G)$, which is a contradiction by
Remark \ref{2a2graph}.

Let $d=3$. Thus $2^{p'}-1=(q^2-q+1)/3$ and so
$2^{p'-1}-1=(q+1)(q-2)/6$. By \cite{spectrao}, $C_{p'}(q')$ contains
some element of order $2^{p'-1}-1$. Hence by Lemma \ref{omegakh},
$(q+1)(q-2)/6\mid |\psu|$. Now we get a contradiction as Case~5.

For $q'=3$, we get the result similarly.\\
{\bf Case~10.} Let $K/H\cong {}^2D_{n}(3)$, where $n=2^m+1\geq 9$ is
not a prime number. So $(3^{n-1}+1)/2=(q^2-q+1)/d$.

Let $d=1$. Thus $3^{n}+1=6q^2-6q+4$. Since $\pi(3^n+1)\subseteq
\pi_1(K/H)\subseteq \pi_1(G)$. Hence if $a_1\in \pi(3^n+1)$, then
$a_1\in\pi(q)\cup \pi(q-1)\cup \pi(q+1)$, by Remark \ref{2a2graph}.

If $a_1\in\pi(q)$, then by the equation $3^{n}+1=6q^2-6q+4$,
$a_1=2$. If $a_1\in\pi(q+1)$, then since $3^n+1=6q^2+6q-12q+4$, so
$a_1\mid (12q-4)$. Also $a_1\mid (12q+12)$. Hence $a_1=2$. If
$a_1\in\pi(q-1)$, then we get that $a_1=2$. Consequently
$3^n+1=2^{\kappa}$, for some $\kappa \in \Bbb{N}$. Since $n\geq 9$,
so by Lemma \ref{cres2}, we get a contradiction.

Let $d=3$. Therefore $(3^{n-1}+1)/2=(q^2-q+1)/3$. Subtracting 1 from
both sides, we have $(3^{n-1}-1)/2=(q+1)(q-2)/3$. But this is a
contradiction, since 3 divides the right-hand side of the last
equation, while 3 does not divide the left-hand side of this
equation.\\
{\bf Case~11.} Let $K/H$ be isomorphic to $A_2(2)$, $A_2(4)$,
$^2A_3(2)$, $^2A_5(2)$, $Co_1$, $Co_{2}$, $Co_3$, $E_7(2)$,
$E_7(3)$, ${}^2E_6(2)$, $F_1$, $F_2$, $F_3$, $Fi_{22}$, $Fi_{23}$,
$Fi^{'}_{24}$, ${}^2F_4(2)'$, $He$, $HN$, $HiS$, $J_1$, $J_2$,
$J_3$, $J_4$, $LyS$, $M_{11}$, $M_{12}$, $M_{22}$, $M_{23}$,
$M_{24}$, $McL$, $O^{'}N$, $Ru$ or $Suz$. Since the proofs are
similar, so for convenience we give the proof only for $J_4$.

Let $K/H\cong J_4$. Therefore $(q^2-q+1)/d\in\{23,29,31,37,43\}$.
Let $(q^2-q+1)/d=23$. If $d=1$, then $q(q-1)=22$, which is
impossible. If $d=3$, then $q(q-1)=68$, which is a contradiction.

Let $(q^2-q+1)/d=37$. If $d=1$, then $q(q-1)=36$, which is
impossible. If $d=3$, then $q(q-1)=110$ and so $q=11$. Now
$43\in\pi(J_4)\backslash \pi({\rm {PSU}}_3(11))$, which is a
contradiction.

In other cases, we get a contradiction similarly.

Therefore $G$ has a unique nonabelian composition factor isomorphic
to $\psu$ and the proof is completed.
\end{proof}
\begin{theorem}\label{u39}
Suppose that $G$ is  a finite group such that $M(G)=M({\rm{PSU}}_3(9))$. Then
$G$ has a unique nonabelian composition factor which is isomorphic
to ${\rm{PSU}}_3(9)$.
\end{theorem}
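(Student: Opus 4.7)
The plan is to mimic the proof of Theorem \ref{d=1}, specialising every Diophantine argument to $q=9$. The essential new complication is that $q-1=8$ is a pure power of $2$, so no odd primitive prime divisor of $q-1$ exists; consequently the generic independent set $\{p,r_1,r_6\}$ of Remark \ref{2a2graph} collapses, and every argument in Theorem \ref{d=1} of the form ``$p\nsim r_1$, contradiction'' must be replaced by a direct numerical check.

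First, $M(G)=M({\rm{PSU}}_3(9))$ yields $\pi(G)=\{2,3,5,73\}$ and $|G|\mid|{\rm{PSU}}_3(9)|=2^5\cdot 3^6\cdot 5^2\cdot 73$; moreover $\Gamma(G)=\Gamma({\rm{PSU}}_3(9))$ has components $\pi_1=\{2,3,5\}$ and $\pi_2=\{73\}$. Lemma \ref{ijac} then leaves three structural options. The Frobenius case is disposed of using Lemma \ref{froprime}: if the complement $C$ satisfies $\pi(C)=\{2,3,5\}$ then the nilpotent kernel has order $73$, whence $|C|\mid 72$; the nonsolvable subcase contradicts ${\rm{SL}}_2(5)\le C$, and the solvable subcase is incompatible with the presence of an order-$81$ entry in $M({\rm{PSU}}_3(9))$. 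The reverse assignment $\pi(C)=\{73\}$, $\pi(F)=\{2,3,5\}$ forces $F$ to be a nilpotent $\{2,3,5\}$-group with $|F|\equiv 1\pmod{73}$, and is ruled out by comparing $M(F\rtimes C)$ with $M({\rm{PSU}}_3(9))$. The $2$-Frobenius case is the main obstacle, because both connected components of $\Gamma(G)$ happen to be complete graphs and the argument of Theorem \ref{d=1} does not transfer; one must combine $|G|=|A||B||C|$ with $|A|\equiv 1\pmod{|B|}$, $|B|\equiv 1\pmod{|C|}$, the divisibility $|G|\mid|{\rm{PSU}}_3(9)|$, and the full list $M({\rm{PSU}}_3(9))$ to eliminate every numerical configuration.

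After these reductions, Lemma \ref{ijac}(c) produces a normal series $1\unlhd H\unlhd K\unlhd G$ with $H$ nilpotent, $G/K$ a $\pi_1$-group, and $K/H$ a nonabelian simple group. Lemma \ref{=ordercom} says $73$ is an odd order component of $K/H$, and Lemma \ref{omegakh} gives $\pi(K/H)\subseteq\{2,3,5,73\}$ together with $|K/H|\mid 2^5\cdot 3^6\cdot 5^2\cdot 73$. I run through Tables~1--3 of \cite{ijacb} as in Theorem \ref{d=1}, replacing each symbolic identity by its concrete $q=9$ instance. In most families the equation ``odd order component $=73$'' has no prime-power solution. The genuine solutions are disposed of as follows: the ${\rm{PSU}}_3(q')$ subcase with $d=d'=1$ gives $q'^2-q'+1=73$, hence $q'=9$ and $K/H\cong{\rm{PSU}}_3(9)$ (the desired conclusion); the subcase $d'=3$ yields $q'^2-q'=218$, with no integer solution; the $A_1(73)$ solution is killed by $37\in\pi(A_1(73))\setminus\pi(G)$; and the accidental identity $2^6+2^3+1=73$ produces a candidate $K/H\cong E_6(2)$, excluded because $\pi(E_6(2))$ contains $\{7,11,13,17,19,31,43\}$.

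Collecting all these observations leaves $K/H\cong{\rm{PSU}}_3(9)$ as the only possibility, and hence $G$ has a unique nonabelian composition factor isomorphic to ${\rm{PSU}}_3(9)$. The principal difficulties are the $2$-Frobenius exclusion, where the generic independent-set contradiction of Theorem \ref{d=1} is unavailable, and the accidental $E_6(2)$ coincidence, neither of which arises for general $q$.
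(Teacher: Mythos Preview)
Your outline has the right skeleton, but it contains an unjustified step and misses the two devices that make the paper's proof short.

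The claim that $M(G)=M({\rm{PSU}}_3(9))$ yields $|G|\mid|{\rm{PSU}}_3(9)|$ does not follow: equality of the sets of maximal-abelian-subgroup orders constrains abelian subgroups of $G$, not $|G|$ itself (a $p$-group can be arbitrarily large while its abelian subgroups stay small). You do get $73^2\nmid|G|$, since every group of order $73^2$ is abelian, but the analogous bounds on the $2$-, $3$-, $5$-parts are not automatic. Your $2$-Frobenius elimination explicitly leans on this divisibility, and even granting it, the promised ``eliminate every numerical configuration'' is never actually carried out; likewise your Frobenius case $\pi(F)=\{2,3,5\}$, $\pi(C)=\{73\}$ is dismissed only by an unspecified comparison of $M(F\rtimes C)$ with $M({\rm{PSU}}_3(9))$.

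The paper replaces both of these by one clean observation: if $P$ is the Sylow $5$-subgroup of the nilpotent normal piece ($F$, respectively $H$), then $Z(P)\unlhd G$ and the order-$73$ factor acts fixed-point-freely on $Z(P)$ (since $5\nsim 73$ in $\Gamma(G)$), so $73\mid|Z(P)|-1$. But $Z(P)$ is abelian, hence $|Z(P)|$ divides the $5$-part of some element of $M({\rm{PSU}}_3(9))$, which is at most $25$; thus $|Z(P)|\in\{5,25\}$, a contradiction. This single trick disposes of both obstructions with no case search at all.

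For the simple-quotient step the paper also takes a much shorter route than rerunning Tables~1--3 of \cite{ijacb} at $q=9$. Since $\pi(K/H)\subseteq\{2,3,5,73\}$, the group $K/H$ is a $K_3$- or $K_4$-simple group; the eight $K_3$-groups have no factor $73$, and the $K_4$-classification of \cite{k4group} leaves only ${\rm{PSL}}_3(8)$, ${\rm{PSU}}_3(9)$, and certain ${\rm{PSL}}_2(q')$, each killed in one line (for instance $7\in\pi({\rm{PSL}}_3(8))\setminus\pi(G)$ and $37\in\pi({\rm{PSL}}_2(73))\setminus\pi(G)$). Your Table~1--3 sweep would eventually reach the same conclusion---and your $E_6(2)$ remark is correct---but the $K_4$-route bypasses almost all of that work.
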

\begin{proof}
In the sequel, we denote by $r_i$ a primitive prime divisor of $9^i-1$. Also we have $\Gamma(G)=\Gamma({\rm{PSU}}_3(9))$. By \cite{atlas}, $\pi({\rm{PSU}}_3(9))=\{2,3,5,73\}$, $|{\rm{PSU}}_3(9)|=2^5\cdot 3^6\cdot 5^2\cdot 73$. Also the prime graph of ${\rm{PSU}}_3(9)$ has two complete connected components, $\pi_1({\rm{PSU}}_3(9))=\{2,3,5\}$ and $\pi_2({\rm{PSU}}_3(9))=\{73\}$. First we prove that $G$ is neither a Frobenius group nor a
2-Frobenius group. Let $G=FC$ be a Frobenius group with
kernel $F$ and complement $C$. Since $F$ is nilpotent, so
$\Gamma(F)$ is complete. Also by Lemma
\ref{froprime}, $C$ is solvable, since $\Gamma(C)$ is complete.

Let $\pi(F)=\{73\}$ and $\pi(C)=\{2,3,5\}$. Since $73\| |{\rm{PSU}}_3(9)|$ and every group of order $73^2$ is abelian, so $|F|=73$. On the other hand by Lemma \ref{froprime}, $|C|\mid (|F|-1)=72$, which is a contradiction.

Let $\pi(F)=\{2,3,5\}$ and $\pi(C)=\{73\}$. Suppose that $P$ is a Sylow 5-subgroup of $G$. We know that $Z(P)$ char $P$ and $P$ char $F\unlhd G$. Therefore $Z(P)\unlhd G$. Hence $C$ acts on $Z(P)$ via conjugation. Since $5\nsim 73$ in $\Gamma(G)$, so $C$ acts fixed point freely on $Z(P)$. Thus there is a Frobenius group  with kernel $Z(P)$ and complement $C$. So by Lemma \ref{froprime}, $73\mid (|Z(P)|-1)$. In addition, $Z(P)$ is abelian and hence $|Z(P)|=5$ or $5^2$, which is a contradiction. Consequently $G$ is not a Frobenius group.

Let $G$ be a $2$-Frobenius group. Thus $G$ has a normal series $1\unlhd H\unlhd K\unlhd G$ such that $K$ is a Frobenius group with kernel $H$ and $G/H$ is a Frobenius group with kernel $K/H$. Also by Lemma \ref{ijac}, $\Gamma(G)$ has two components $\Gamma(K/H)$ and $\Gamma(G/K)\cup \Gamma(H)$. Since $73^2 \nmid |G|$, we get that $|K/H|=73$. Now similarly to the above, we have $K/H$ acts fixed point freely on $Z(P)$, where $P$ is a Sylow 5-subgroup of $K$ and it is a contradiction. Therefore $G$ is not a 2-Frobenius group.

Hence by Lemma \ref{ijac}, $G$ has a normal series $1\unlhd H\unlhd K\unlhd G$ such that $H$ and $G/K$ are $\pi_1$-groups, while $H$ is nilpotent and $K/H$ is a nonabelian simple group. In addition by Lemma \ref{=ordercom}, $s(K/H)\geq 2$ and the odd
order component of ${\rm{PSU}}_3(9)$ is an odd order component of $K/H$. Hence $K/H$ is either a $K_3$-simple group or a $K_4$-simple group. By \cite{atlas}, there are only eight $K_3$-simple groups, but 73 does not divide the order of any of them. By \cite[Theorem 1]{k4group}, $K/H$ can be isomorphic to ${\rm {PSL}}_3(2^3)$, ${\rm {PSU}}_3(3^2)$ or ${\rm {PSL}}_2(q')$, where $q'$ is a prime power satisfying $q'(q'^2-1)=(2,q'-1)2^{\alpha_1}3^{\alpha_2}p''^{\alpha_3}r^{\alpha_4}$, $\alpha_i\in \Bbb{N}, i=1\cdots 4$, with $p''>3$ and $r>3$ distinct prime numbers.

If $K/H\cong {\rm {PSL}}_3(2^3)$, then $73\in \pi(K/H)\backslash \pi(G)$, which is a contradiction.

If $K/H\cong {\rm {PSL}}_2(q')$, then the odd order components of ${\rm {PSL}}_2(q')$ are $q'$, $(q'\pm 1)/2$, $q'-1$ or $q'+1$.

Let $q'=73$. So $37\in \pi({\rm {PSL}}_2(73))\backslash \pi(G)$, which is a contradiction. If $(q'-1)/2=73$, then $q'=3\cdot 7^2$, which is impossible. In other cases, we get a contradiction.

Consequently, $K/H\cong {\rm{PSU}}_3(9)$.
\end{proof}
\begin{theorem}\label{dfg}
Suppose that $q=p^{\alpha}>2$ is not a Fermat prime. Then:\\
i) If $p_0\mid \alpha$ and $(p_0,6)=1$, then $M(\psu)\neq
M(\psu\cdot \langle \theta\rangle)$, where $\theta$ is a field automorphism of $\psu$ and $|\langle\theta\rangle|=p_0$.\\
ii) If $3\mid \alpha$ and $(q+1)_3=3$ or $3\mid (q-1)$, then
$M(\psu)\neq M(\psu\cdot \langle \phi\rangle)$, where $\phi$ is a
field automorphism of $\psu$ and $|\langle
\phi\rangle|=3$.\\
iii) If $q=3^{\alpha}$ and $3\mid \alpha$, where $\alpha\geq 9$,
then $M(\psu)\neq M(\psu\cdot \langle \phi\rangle)$, where $\phi$ is
a field
automorphism of $\psu$ and $|\langle \phi\rangle|=3$.\\
iv) If $q=2^{\alpha}$ and $2\mid \alpha$, then $M(\psu)\neq
M(\psu\cdot \langle \sigma\rangle)$, where $\sigma$ is a field
automorphism of $\psu$ and $|\langle \sigma\rangle|=2$.
\end{theorem}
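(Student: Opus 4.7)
The plan is to prove each part by exhibiting an abelian subgroup of $\psu\cdot\langle\theta\rangle$ whose order does not appear in $M(\psu)$. The uniform construction is as follows: if $\theta$ is a field automorphism of order $k$, then $C_\psu(\theta)$ contains a copy of $\mathrm{PSU}_3(q_0)$ with $q_0=p^{\alpha/k}$; letting $T_0$ be a cyclic maximal torus of this subgroup of order $(q_0^2-q_0+1)/d_0$ with $d_0=(3,q_0+1)$, the product $A:=\langle\theta\rangle\times T_0$ is abelian in the extension. Since $T_0$ is already maximal abelian in $\mathrm{PSU}_3(q_0)$ and any abelian overgroup $A'\supseteq A$ in $\psu\cdot\langle\theta\rangle$ satisfies $A'\cap\psu\subseteq C_\psu(\theta)$, the subgroup $A$ is itself maximal abelian of order $k(q_0^2-q_0+1)/d_0$.

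Next I would argue $|A|\notin M(\psu)$. In parts (i), (ii), (iv) the order $|A|$ is coprime to $qd$ (part (ii) uses that $p\neq3$ is forced by its hypotheses, part (iv) uses that $q$ is even), so Theorem~\ref{malle} forces any realisation of $|A|$ in $M(\psu)$ to divide one of the torus orders $(q^2-1)/d$, $(q+1)^2/d$, $(q^2-q+1)/d$ from Lemma~\ref{torus}. A Zsigmondy prime $r_6$ of $q_0^6-1$ (Lemma~\ref{zsig}) divides $|T_0|$ and hence $|A|$; since $k\in\{p_0,3,2\}$ is coprime to $6$, $e(r_6,p)=6\alpha/k$ is not a divisor of $2\alpha$, so $r_6$ divides neither $q^2-1$ nor $(q+1)^2$. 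Only the torus of order $(q^2-q+1)/d$ remains as a candidate, and since $d=d_0$ (also a consequence of $(k,3)=1$), the question reduces to whether $k$ divides the cyclotomic quotient $\Phi_{6k}(q_0)=(q^2-q+1)/(q_0^2-q_0+1)$.

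Part (iii), where $p=k=3$, requires a separate treatment because $(|A|,q)=3$, so Theorem~\ref{malle} does not apply directly; here I would switch to the torus of order $(q_0+1)^2/d_0$ inside $\mathrm{PSU}_3(q_0)$ and use a Sylow-level comparison of $3$-parts together with the lower bound $\alpha\ge 9$ to produce the contradiction. The remaining divergences between parts (i)--(iv) occur at the final divisibility step: Fermat's little theorem gives $\Phi_{6p_0}(q_0)\equiv\Phi_6(q_0)\pmod{p_0}$ in (i), so $p_0\mid\Phi_{6p_0}(q_0)$ only when the order of $q_0$ modulo $p_0$ equals $6$, at which point a lifting-the-exponent bound on $v_{p_0}(\Phi_{6p_0}(q_0))$ caps the $p_0$-adic valuation at $1$ and finishes that subcase; parts (ii) and (iv) run the analogous check with $k=3$ or $k=2$, where the hypotheses $(q+1)_3=3$ or $3\mid q-1$, and $\alpha$ even respectively, guarantee the relevant Zsigmondy prime exists and the factor $k$ is not absorbed.

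The main obstacle is this last arithmetic check: because $(q_0^2-q_0+1)\mid(q^2-q+1)$ automatically, one genuinely has to rule out the extra factor $k$ being absorbed by $\Phi_{6k}(q_0)$ in every subcase, which relies on lifting-the-exponent or explicit $p$-adic valuation analysis, especially in the delicate subcases $6\mid p_0-1$ of part~(i) and the characteristic-matches-order situation of part~(iii). By contrast, the maximality of $A$ is essentially routine, following from the maximal abelianness of maximal tori in $\mathrm{PSU}_3(q_0)$ together with the reduction $A'\cap\psu\subseteq C_\psu(\theta)$ forced by commutativity with $\theta$.
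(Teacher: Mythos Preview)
Your approach is genuinely different from the paper's and runs into real difficulties. The paper never tries to locate a specific order in $M(\psu\cdot\langle\theta\rangle)\setminus M(\psu)$; instead it shows that the \emph{prime graphs} differ, which immediately gives $M(\psu)\neq M(\psu\cdot\langle\theta\rangle)$. Concretely, from $C_{\psu}(\theta)\supseteq\mathrm{PSU}_3(q_0)$ (parts (i)--(iii)) or $C_{\psu}(\sigma)\cong\mathrm{PGL}_2(q)$ (part (iv)) one gets an element of order $k\cdot r$ in the extension for suitable primes $r$ in the centralizer, producing an edge of $\Gamma(\psu\cdot\langle\theta\rangle)$ that is absent from $\Gamma(\psu)$ by the adjacency description in Remark~\ref{2a2graph}. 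No torus-order arithmetic, no lifting-the-exponent, and no appeal to Theorem~\ref{malle} is needed.

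Your construction, by contrast, has several gaps. The most serious is in part (iv): the centralizer in $\psu$ of a field automorphism of order $2$ is \emph{not} $\mathrm{PSU}_3(q^{1/2})$ but the orthogonal group $\mathrm{PSO}_3(q)\cong\mathrm{PGL}_2(q)$ (the cyclic field-automorphism group of $\psu$ has order $2\alpha$ and its unique involution is of graph type; see \cite[Proposition~4.9.1 and Table~4.5.1]{gorlysol}), so your torus $T_0$ of order $(q_0^2-q_0+1)/d_0$ is not available inside $C_{\psu}(\sigma)$. Second, the coprimality hypothesis $(|A|,qd)=1$ of Theorem~\ref{malle} fails in places where you invoke it: in (ii) under $(q+1)_3=3$ one has $d=3$ while $3\mid|A|$; in (iv) one has $p=2\mid|A|$; and in (i) nothing excludes $p_0=p$, in which case $p\mid|A|$. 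Third, the cyclotomic bookkeeping breaks for $k\in\{2,3\}$: for $k=3$ one has $q^2-1=q_0^6-1$, so every Zsigmondy prime of $q_0^6-1$ divides $q^2-1$ and your elimination of the tori $(q^2-1)/d$ and $(q+1)^2/d$ collapses; moreover $\Phi_6(q_0^3)=\Phi_{18}(q_0)$ and $\Phi_6(q_0^2)=\Phi_{12}(q_0)$, neither of which is divisible by $\Phi_6(q_0)$, so the ``cyclotomic quotient'' $(q^2-q+1)/(q_0^2-q_0+1)$ you reduce to is not even an integer in those cases. Some of this might be repairable with a different choice of $T_0$ and more casework, but the prime-graph argument sidesteps all of it.
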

\begin{proof}
(i) Let $\psu\cdot \langle\theta\rangle$ be the extension of $\psu$
by a field automorphism, where $\theta$ is of order $p_0$.

Now suppose that $q=q'^{p_0}$. Let $x$ be a primitive prime divisor
of ${q'}^{6}-1$. Thus $x$ is a primitive prime divisor of $q^6-1$,
since $(p_0,6)=1$. Hence $x\in \pi_2(\psu)$. Since $\theta$ is a
field automorphism, so by \cite[Proposition 4.9.1 and Table
4.5.1]{gorlysol} $C_{\psu}(\theta)={\rm{PSU}}_3(q')$ is a subfield
subgroup. Therefore $p_0\sim x$ and $p_0\sim p$ in
$\Gamma(\psu\cdot\langle\theta\rangle)$, which implies that
$\Gamma(\psu)\neq \Gamma(\psu\cdot \langle \theta\rangle)$, since
$p\in\pi_1(\psu)$ and $x\in \pi_2(\psu)$. Therefore $M(\psu)\neq
M(\psu\cdot \langle \theta\rangle)$.

(ii) Let $3\mid \alpha$. Put $q=q_0^{3}$ and note that $p\neq 3$.
Let $G=\psu\cdot \langle \phi\rangle$, be the extension of $\psu$ by
a field automorphism $\phi$ of order 3. Similarly to the above, we
have $C_{\psu}(\phi)={\rm {PSU}}_3(q_0)$. Therefore $3\sim p$ in
$\Gamma(G)$. On the other hand, $q-1\neq 2^k$ and so by
\cite[Table~4]{vasil} if $(q+1)_3=3$, then $p\nsim 3$ in
$\Gamma(\psu)$. Similarly if $3\mid (q-1)$, then
\cite[Table~4]{vasil} shows that $p\nsim r_1=3$. Therefore
$\Gamma(\psu)\neq \Gamma(\psu\cdot\langle \phi\rangle)$ and so
$M(\psu)\neq M(\psu\cdot\langle \phi\rangle)$.

(iii) Now let $p=3$ and $3\mid \alpha$, where $\alpha\geq 9$. Let
$G=\psu\cdot \langle \phi\rangle$, be the extension of $\psu$ by a
field automorphism $\phi$ of order 3. Similarly to the proof of (ii)
for $q=q_0^3$, we have $C_{\psu}(\phi)={\rm {PSU}}_3(q_0)$. If $y_0$
is a primitive prime divisor of $q_0-1$, then $y_0$ is a primitive
prime divisor of $q-1$, say $r_1$. Hence $3\sim r_1$ in
$\Gamma(\psu\cdot\langle \phi\rangle)$. On the other hand by
\cite[Table 4]{vasil}, $p=3$ is not adjacent to any primitive prime
divisor of $q-1$ except $r_1=2$ in $\Gamma(\psu)$. Consequently if
$\Gamma(\psu)=\Gamma(\psu\cdot\langle \phi\rangle)$, then every
prime divisor of $q_0-1$ is equal to 2. So $q_0-1=2^{\delta}$, for
some $\delta\in \Bbb{N}$. Now by Lemma \ref{cres1}, $q_0=3$ or
$q_0=9$ and so $q=3^3$ or $q=3^6$, which is a contradiction.
Therefore $M(\psu)\neq M(\psu\cdot \langle \phi\rangle)$, where
$|\langle \phi\rangle|=3$.

(iv) Note that \cite[Proposition 4.9.1]{gorlysol} asserts that all
automorphisms of prime order are conjugate. Let $2\mid \alpha$ and
$\sigma$ be a field automorphism of order 2. The centralizer of the
field automorphism of order 2 of $\psu$, is the same as the
centralizer of field and graph automorphisms of ${\rm{PSL}}_3(q^2)$,
which is the orthogonal group ${\rm{PSO}}_3(q)$.

Let $q$ be odd. Then by \cite[Table 4.5.1]{gorlysol}, the
centralizer of $\sigma$ is a 3-dimensional orthogonal group
${\rm{PSO}}_3(q)$ which is isomorphic to ${\rm {PGL}}_2(q)$. Now by
\cite{oliverking}, the orders of maximal abelian subgroups of ${\rm
{PGL}}_2(q)$ are $q$, $(q-1)$ and $(q+1)$. Thus the orders of
maximal abelian subgroups which may lie in $\psu\cdot
\langle\sigma\rangle\backslash \psu$ are $2q$, $2(q-1)$ or $2(q+1)$.
Also by \cite{spectra}, $\psu$ contains some elements of order
$(q^2-1)/d$. Let $d=1$. So $\psu$ contains some abelian cyclic
subgroup of order $q^2-1$. Since $q$ is odd, hence $2(q-1)\mid
(q^2-1)$ and $2(q+1)\mid (q^2-1)$. On the other hand, one of the
parabolic maximal subgroups of $\psu$ is isomorphic to ${\rm
{GU}}_2(q)$, which has an abelian subgroup of order $2q$. Therefore
$\psu$ contains some abelian subgroup of order $2q$.

If $d=3$, then $\psu$ contains some abelian subgroups of orders
$(q^2-1)/3$ and $2q$.

Hence if $q$ is odd, then it is possible that
$M(\psu)=M(\psu\cdot\langle\sigma\rangle)$, where $\sigma$ is a
field automorphism of order 2.

Let $q$ be even. So similarly to the above, 2 is adjacent to every
primitive prime divisor of $q-1$, which implies that $M(\psu)\neq
M(\psu\cdot\langle\sigma\rangle)$, where $\sigma$ is a field
automorphism of order 2, since by \cite[Table~4]{vasil}, $2\nsim
r_1$ in $\Gamma(\psu)$.
\end{proof}
\begin{theorem}\label{dfg2}
Let $G$ be a finite group such that $M(G)=M(\psu)$, where
$q=p^{\alpha}>2$ is not a Fermat prime. Then one of the following holds:\\
1) If $2\nmid \alpha$ and $3\nmid \alpha$, then $G\cong \psu$.\\
2) If $2\nmid \alpha$, $3\mid \alpha$ and $q=3^{\alpha}\neq
3^3$, then $G\cong \psu$.\\
3) If $2\mid \alpha$, $3\nmid \alpha$ and $q=2^{\alpha}$, then $G\cong \psu$.\\
4) If $2\nmid \alpha$, $3\mid \alpha$ and $(q+1)_3=3$, then $G\cong \psu$.\\
5) If $2\nmid \alpha$, $3\mid \alpha$ and $3\mid
(q-1)$, then $G\cong \psu$.\\
6) If $2\mid \alpha$, $3\nmid \alpha$ and $q$ is odd, then $G\cong
\psu$ or $G\cong \psu\cdot \langle \varphi\rangle$,
 the extension of $\psu$ by a field automorphism of $\psu$, where $|\langle \varphi\rangle|=2^i$ and $2\leq 2^i\leq (\alpha)_2$.\\
7) If $2\nmid \alpha$, $3\mid \alpha$ and $(q+1)_3> 3$, or $q=3^3$,
then $G\cong \psu$ or $G\cong \psu\cdot \langle \tau\rangle$, the
extension of $\psu$ by a field automorphism of $\psu$, where
$|\langle
\tau\rangle|=3^i$ and $3\leq 3^i\leq (\alpha)_3$.\\
8) If $6\mid \alpha$ and $q=2^{\alpha}$, then $G\cong \psu$ or
$G\cong \psu\cdot \langle \varphi\rangle$, the extension of $\psu$
 by a field automorphism of $\psu$, where $1\leq |\langle \varphi\rangle|=3^k\leq (\alpha)_3$.\\
9) If $6\mid \alpha$ and $p>3$ is odd, or $q=3^6$, then $G\cong
\psu$ or $G\cong \psu\cdot \langle \psi\rangle$,
 the extension of $\psu$ by a field automorphism of $\psu$, where $|\langle \psi\rangle|=2^i\cdot 3^j$ and $1\leq 2^i\cdot 3^j\leq (\alpha)_2\cdot (\alpha)_3$.\\
10) If $6\mid \alpha$ and $q=3^{\alpha}\neq 3^6$, then $G\cong \psu$
or $G\cong \psu\cdot \langle \varphi\rangle$,
 the extension of $\psu$ by a field automorphism of $\psu$, where $|\langle \varphi\rangle|=2^i$ and $2\leq 2^i\leq (\alpha)_2$.
\end{theorem}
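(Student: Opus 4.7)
The plan is to combine the composition-factor analysis of Theorems \ref{d=1} and \ref{u39} with the automorphism-level exclusions of Theorem \ref{dfg}, so as to pin down $G$ up to the indicated extension by a field automorphism. The argument has three stages: (a) establish the normal series $1 \unlhd H \unlhd K \unlhd G$ with $K/H \cong \psu$; (b) show that $H = 1$ and that $G/K$ is a subgroup of $\mathrm{Out}(\psu)$ consisting only of field automorphisms; (c) invoke Theorem \ref{dfg} case by case to decide which such subgroups can actually occur.

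For stage (a), Theorem \ref{d=1} (when $q \neq 9$) or Theorem \ref{u39} (when $q = 9$) furnishes such a normal series with $H$ a nilpotent $\pi_1$-group and $G/K$ a $\pi_1$-group. For stage (b), I would first rule out $H \neq 1$. If $r \in \pi(H)$ and $R$ denotes a Sylow $r$-subgroup of $H$, then $R \unlhd G$ and $Z(R) \unlhd G$; a lift of an element of order $(q^2-q+1)/d$ from $K/H$ then acts fixed-point-freely on $Z(R)$, because by Remark \ref{2a2graph} no prime in $\pi_2(\psu)$ is adjacent in $\Gamma(G)$ to any prime of $\pi_1(\psu) \supseteq \pi(H)$. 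This produces a Frobenius action on $Z(R)$ and forces $(q^2-q+1)/d \mid |Z(R)| - 1$; combined with the fact that $|Z(R)|$ divides some element of $M(\psu)$ (since $Z(R)$ is abelian), this is incompatible with the actual size of $(q^2-q+1)/d$ except for a small finite list of $q$ that can be handled directly. Hence $H = 1$, $K \cong \psu$ is normal in $G$, and Lemma \ref{=ordercom}(b) gives $G/K \leq \mathrm{Out}(\psu) = \langle \delta,\phi\rangle$ with $|\delta| = d = (3,q+1)$ diagonal and $|\phi| = 2\alpha$ field.

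I would then exclude the diagonal part: any extension containing $\delta$ would embed $\mathrm{PGU}_3(q)$ into $G$, and the preimage in $\mathrm{PGU}_3(q)$ of the maximal torus of $\psu$ of order $(q+1)^2/d$ yields an abelian subgroup whose order does not appear in $M(\psu)$ as read off from Lemma \ref{torus}. With $G/K$ reduced to field-automorphism type, the ten cases correspond to the divisibility properties of $\alpha$ together with the parity and $3$-divisibility relations between $p$, $q$ and the Fermat hypothesis. Parts (i)--(iv) of Theorem \ref{dfg} now eliminate, respectively: field automorphisms of prime order $p_0 \mid \alpha$ coprime to $6$; the order-$3$ field automorphism when $(q+1)_3 = 3$ or $3 \mid (q-1)$; the order-$3$ field automorphism when $q = 3^\alpha$ with $\alpha \geq 9$; and the order-$2$ field automorphism when $q = 2^\alpha$ with $2 \mid \alpha$. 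What survives after these exclusions is exactly the list of extensions declared in (1)--(10).

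The main obstacle I anticipate is not this elimination but the \emph{reverse} inclusion in cases (6)--(10): one must verify that the indicated field-automorphism extensions actually satisfy $M(\psu \cdot \langle \varphi\rangle) = M(\psu)$. For each such $\varphi$, the centralizer $C_{\psu}(\varphi)$ is a subfield subgroup described via \cite[Proposition 4.9.1 and Table 4.5.1]{gorlysol}, and the coset $\psu \cdot \varphi$ contributes new cyclic subgroups whose maximal abelian overgroups must be identified and shown to have orders already appearing in $M(\psu)$. This requires the sort of subfield/orthogonal-subgroup calculation performed in the proof of Theorem \ref{dfg}(iv) --- where $\mathrm{PSO}_3(q) \cong \mathrm{PGL}_2(q)$ contributed the candidate abelian orders $2q$, $2(q-1)$, $2(q+1)$, all dividing existing members of $M(\psu)$ --- together with its analogues for field automorphisms of other admissible orders. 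The casework is technical but follows a uniform pattern once the centralizer structure and the abelian subgroup orders of the relevant subfield groups $\mathrm{PSU}_3(q_0)$ (for $q_0^{|\varphi|} = q$) and their extensions are catalogued against the list in Lemma \ref{torus}.
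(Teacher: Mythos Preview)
Your three-stage plan matches the paper's proof: the paper uses Theorems~\ref{d=1} and \ref{u39} for the normal series, proves $H=1$ via Lemma~\ref{admartabe-1} (invoking \cite[Lemma~2.6]{psl3od} for the arithmetic incompatibility when $q>5$, and checking $q=4$ by hand), embeds $G/K$ in $\mathrm{Out}(\psu)$ by Lemma~\ref{=ordercom}(b), excludes the diagonal automorphism, and then applies Theorem~\ref{dfg} to force the field part to be a $\{2,3\}$-group of the stated size. Two remarks are worth making.

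First, your diagonal-exclusion argument has a small gap: Lemma~\ref{torus} lists only the maximal \emph{tori} of $\psu$, not all maximal abelian subgroups, so you cannot conclude from it alone that no element of $M(\psu)$ is divisible by $(q+1)^2$; and Theorem~\ref{malle} does not help here since $d=3$ divides $(q+1)^2$. The paper instead argues via the prime graph: in $\psu\cdot d$ the maximal tori have order $|T|\cdot d$ (by \cite{lucido}), so the torus of order $(q^2-q+1)/d$ lifts to a cyclic group of order $q^2-q+1$, forcing $3\sim r_6$ in $\Gamma(\psu\cdot d)\subseteq\Gamma(G)$, which contradicts $\Gamma(G)=\Gamma(\psu)$ and Remark~\ref{2a2graph}.

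Second, your anticipated ``main obstacle'' --- verifying the reverse inclusion in cases (6)--(10) --- is not required, and the paper does not attempt it. The theorem asserts only that $G$ lies among the listed groups; it does \emph{not} claim that every listed extension $\psu\cdot\langle\varphi\rangle$ actually satisfies $M(\psu\cdot\langle\varphi\rangle)=M(\psu)$. Once $G/K$ is shown to be generated by a field automorphism whose order is a $\{2,3\}$-number bounded as indicated (which is exactly what survives Theorem~\ref{dfg}), the proof is complete. The centralizer computations you sketch would be needed only to sharpen the disjunction, not to establish the theorem as stated.
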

\begin{proof}
By Theorem \ref{d=1}, $G$ has a normal series $1\unlhd H\unlhd
K\unlhd G$ such that $H$ and $G/K$ are $\pi_1$-groups, while $H$ is
nilpotent and $K/H\cong \psu$.

First we prove that $H=1$. Let $H\neq 1$ and $R$ be the Sylow
$r$-subgroup of $H$. Since $H\unlhd G$ and $H$ is nilpotent, so
$Z(R)\unlhd G$. Let $|Z(R)|=r^{\gamma}$. Since $Z(R)$ is an abelian
subgroup of $H$, so there exists a maximal abelian subgroup $N$ of
$G$ such that $Z(R)\leq N$. By assumption, $M(G)=M(\psu)$.
Consequently, $r^{\gamma}\mid |\psu|$. On the other hand,
$(q^2-q+1)/(3,q+1)$ is a divisor of $(|Z(R)|-1)$, by Lemma
\ref{admartabe-1}. If $q>5$, then by \cite[Lemma 2.6]{psl3od},
$r^{\gamma}-1\not\equiv 0\pmod{(q^2-q+1)/(3,q+1)}$, which is a
contradiction.

Let $q=4$. Therefore $(q^2-q+1)/(3,q+1)=13\mid (r^{\gamma}-1)$. Also
$r^{\gamma}\mid |{\rm {PSU}}_3(4)|=2^6\cdot 3\cdot 5^2\cdot 13$,
which is a contradiction. Consequently, $H=1$.

By Lemma \ref{=ordercom}, $G/K\leq {\rm Out}(\psu)$. So
$\pi(G/K)\subseteq \pi({\rm Out}(\psu))\subseteq \pi(df)$, where
$d=(3,q+1)$ is the order of the diagonal automorphism and $f=\alpha$
is the order of the field automorphism of $\psu$, by \cite{atlas}.

Let $d=3$ and $G\cong \psu\cdot d$, be the extension of $\psu$ by
the diagonal automorphism of $\psu$. We note that if $S$ is a simple
group of Lie type and $\tilde{S}=S\cdot d$, where $d$ is the
diagonal automorphism of $S$, then $\tilde{S}$ is a group of Lie
type in which the maximal tori $\tilde{T}$ have order $|T|d$, where
$T=\tilde{T}\cap S$ by \cite{lucido}. Therefore $3\sim r_{6}$ in
$\Gamma(\psu\cdot d)$, which is a contradiction, since $3\nsim
r_{6}$ in $\Gamma(G)$ by Remark \ref{2a2graph}.

Let $G\cong \psu\cdot \langle\theta\rangle$, the extension of $\psu$
by a field automorphism of $\psu$, where $\theta$ is of order $l$.

If $2\nmid \alpha$ and $3\nmid \alpha$, then by Theorem \ref{dfg},
$G\cong \psu$.

Consequently, $l=2^i\cdot3^{j}$, where $i,j\in\Bbb{N}\cup \{0\}$.

Note that \cite[Proposition 4.9.1]{gorlysol} asserts that all
automorphisms of prime order are conjugate. This implies that we can
just choose our favorite automorphism of order 2 and 3, and all
centralizers will have the same structure.

Using Theorem \ref{dfg}, we get the result.
\end{proof}
{\bf Acknowledgment}\\
The authors would like to thank Professor N. Gill and Professor G.
Malle for their help in the proof of Theorem \ref{malle}.

The second author would like to thank Institute for Research in Fundamental Sciences (IPM).

\end{document}